\newtheorem{thm}{Theorem}[section]
\newtheorem{lem}[thm]{Lemma}
\newtheorem{prop}[thm]{Proposition}
\newtheorem{ex}[thm]{Example}
\newtheorem{cor}[thm]{Corollary}
\theoremstyle{definition}
\newtheorem{definition}[thm]{Definition}
\newtheorem{obs}[thm]{Observation}
\newtheorem{remark}[thm]{Remark}
\newtheorem{prob}[thm]{Problem}
\newcommand{\blackged}{\hfill$\blacksquare$}
\newcommand{\whiteged}{\hfill$\square$}
\newcounter{proofcount}
\renewenvironment{proof}[1][\proofname.]{\par
  \ifnum \theproofcount>0 \pushQED{\whiteged} \else \pushQED{\blackged} \fi%
  \refstepcounter{proofcount}
  \normalfont 
  \trivlist
  \item[\hskip\labelsep
        \itshape
    {\bf\em #1}]\ignorespaces
}{%
  \addtocounter{proofcount}{-1}
  \popQED\endtrivlist
}
\begin{document}

\begin{center}
    \textbf{Lifting bicategories through the Grothendieck construction}
    
    \
    
    \textit{Juan Orendain}

\end{center}

\

\noindent \textit{Abstract:} We treat the problem of lifting bicategories into double categories through categories of vertical morphisms. We make use of a specific instance of the Grothendieck construction to provide, for every bicategory equipped with a possible vertical category, together with a suitable monoidal pre-cosheaf relating these two structures, a double category lifting the decorated bicategory along the category of vertical morphisms provided as set of initial conditions. We prove in particular that every decorated bicategory admits a lift to a double category. We study relations of instances of our construction to foldings, cofoldings, framed bicategories and globularily generated double categories.

\tableofcontents

\section{Introduction}

\noindent Double categories were introduced by Ehresmann in \cite{Ehr1,Ehr2}. 2-categories and bicategories were later introduced by Benabeu in \cite{Benabou2cats} and \cite{BenabouBicats} respectively. Both concepts model 2-dimensional categorical structures, each with its advantages and disadvantages. Double categories accomodate more flexible structures, see \cite{SegalCFT,SchulmanDerived} while bicategories are better behaved with respect to pasting, see \cite{Power} as opposed to \cite{DawsonPare}, for example. There are many classical ways of relating double categories and bicategories. Every double category has an underlying horizontal bicategory and every bicategory can be considered as a trivial double category. More interestingly the Ehresmann double category of quintets construction \cite{Ehr4} and the category of adjoints construction \cite{Palmquist} non-trivially associate double categories to bicategories.   

We consider the following problem: Given a bicategory $\mathcal{B}$ and a category $\mathcal{B}^*$ such that the collection of 0-cells of $\mathcal{B}$ and the collection of objects of $\mathcal{B}^*$ are equal, we wish to construct interesting double categories $C$ having $\mathcal{B}$ as horizontal bicategory and having $\mathcal{B}^*$ as category of objects. We express this through the equation $H^*C=(\mathcal{B}^*,\mathcal{B})$. In that case we call $C$ an internalization of the pair $(\mathcal{B}^*,\mathcal{B})$. Put pictorially, if we are given a collection of diagrams of the form:

\begin{center}
\begin{tikzpicture}
  \matrix (m) [matrix of math nodes,row sep=4em,column sep=4em,minimum width=2em]
  {\bullet&\bullet\\};
  \path[-stealth]
    (m-1-1) edge [bend right=45] node [below]{$\alpha$}(m-1-2)
            edge [bend left=45] node [above]{$\beta$} (m-1-2)
            edge [white] node [black][fill=white] {$\varphi$} (m-1-2);
\end{tikzpicture}
\end{center}

\noindent forming a bicategory $\mathcal{B}$ and a collection of vertical arrows:

\begin{center}
\begin{tikzpicture}
  \matrix (m) [matrix of math nodes,row sep=4em,column sep=4em,minimum width=2em]
  {\bullet\\
  \bullet\\};
  \path[-stealth]
    (m-1-1) edge node [left]{$f,g$}(m-2-1);
\end{tikzpicture}
\end{center}

\noindent forming a category $\mathcal{B}^*$, such that these diagrams are related by the fact that the 0-cells above and below are the same, we wish to understand ways of coherently combining these structures into collections of squares:

\begin{center}
\begin{tikzpicture}
  \matrix (m) [matrix of math nodes,row sep=3em,column sep=3em,minimum width=2em]
  {
     \bullet&\bullet \\
     \bullet&\bullet \\};
  \path[-stealth]
    (m-1-1) edge node [above]{$\alpha$}(m-1-2)
            edge [white] node [black][fill=white]{$\psi$}(m-2-2)    
    (m-2-1) edge node [below] {$\beta$} (m-2-2)
    (m-1-1) edge node [left]{$f$} (m-2-1)
    (m-1-2) edge node [right]{$g$} (m-2-2);
\end{tikzpicture}

\end{center}

\noindent forming double categories $C$, such that the arrows of the left and right edges of the the above squares are precisely the arrows of $\mathcal{B}^*$ and such that the squares of the form:

\begin{center}
\begin{tikzpicture}
  \matrix (m) [matrix of math nodes,row sep=3em,column sep=3em,minimum width=2em]
  {
     \bullet&\bullet \\
     \bullet&\bullet \\};
  \path[-stealth]
    (m-1-1) edge node [above]{$\alpha$}(m-1-2)
            edge [white] node [black][fill=white]{$\psi$}(m-2-2)    
    (m-2-1) edge node [below] {$\beta$} (m-2-2)
    (m-1-1) edge [blue]node [black][left]{$id_\bullet$} (m-2-1)
    (m-1-2) edge [blue]node [black][right]{$id_\bullet$} (m-2-2);
\end{tikzpicture}

\end{center}

\noindent can be identified with the diagrams defining $\mathcal{B}$. Solutions to this problem are easily seen to exist in specific cases, e.g. the case in which $\mathcal{\mathcal{B}}^*$ is trivial is solved by the trivial double category construction and the case in which $\mathcal{B}^*$ and the horizontal category of $\mathcal{B}$ are equal is solved by the Ehresmann double category of quintets construction for $\mathcal{B}$. An interesting instance of this problem is the case in which $\mathcal{B}^*$ is the category of von Neumann algebras and their morphisms and $\mathcal{B}$ is the bicategory of Hilbert bimodules, see \cite{Bartels1,Bartels2,Landsman}. We provide solutions to this problem for general $\mathcal{B}^*$ and $\mathcal{B}$.

The Grothendieck construction establishes an equivalence between the categorical notion of Grothendieck fibration and the algebraic notion of presheaf on \textbf{Cat}. Intuitively Grothendieck fibrations are categorical structures capturing the notion of category parametrized by categories. The Grothendieck construction puts this notion into algebraic terms. We make use of a specific instance of the Grothendieck construction to provide solutions to the problem presented above. Given a bicategory $\mathcal{B}$ and a category $\mathcal{B}^*$ as above, we construct, for every pre-cosheaf $\Phi$ of $\mathcal{B}^*$ on \textbf{Cat} satisfying certain conditions related to $\mathcal{B}$, a double category $C^\Phi$ solving the equation $H^*C^\Phi=(\mathcal{B}^*,\mathcal{B})$ posed above. In particular we prove that such equation admits solutions for every pair $(\mathcal{B}^*,\mathcal{B})$. The Grothendieck construction has already been studied in the context of limits in double category in \cite{GrandisPare}.

Recent techniques in the theory of double categories use additional structure to reduce questions regarding the theory of double categories to questions of associated bicategories. These techniques include the theory of holonomies of Brown and Spencer \cite{BrownSpencer}, the theory foldings and cofoldings of Brown and Mosa \cite{BrownMosa} and the theory of framed bicategories of Schulman \cite{SchulmanFramed} among others. These structures relate vertical and horizontal structures of double categories and establish correspondences between certain squares on double categories with simpler squares in a coherent way. We study relations of double categories constructed through our methods and the conditions mentioned above. Our techniques are closely related to the techniques introduced by Schulman in his treatise of monoidal fibrations and framed bicategories. We prove that not all double categories constructed through our methods are framed bicategories. The exact relation between double categories constructed through our methods and framed bicategories constructed via monoidal fibrations is yet to be explored.

The problem of finding solutions to the equation $H^*C=(\mathcal{B}^*,\mathcal{B})$ has already been studied by the author in \cite{yo1,yo2} where the notion of globularily generated double category, or GG double category for short, and vertical length were presented. GG double categories are minimal solutions to the above problem and we thus study relations of this condition and double categories constructed through the methods presented in this paper. The vertical length of a double category is meant to serve as a measure of how intricate the the relation between vertical and horizontal composition of squares in a double category is. We study the vertical length of double categories constructed through our methods.

\

\noindent We present the contents of this paper. In \textbf{section 2} we establish the notational and pictorial conventions used throughout the paper. In \textbf{section 3} we provide a detailed account of the construction of double categories of the form $C^\Phi$ and we prove that equations $H^*C=(\mathcal{B}^*,\mathcal{B})$ as above always admit solutions. In \textbf{section 4} we present examples of double categories constructed through the methods presented in section 3. We provide in particular the construction of a linear double category $C$ such that $H^*C$ is the pair formed by the category of von Neumann algebras and the bicategory of Hilbert bimodules. In \textbf{section 5} we study relations between double categories constructed through our methods and the conditions of having foldings and cofoldings. In \textbf{section 6} we prove that double categories constructed in section 3 all have vertical length 1. In \textbf{section 7} we study conditions for double categories constructed in section 2 that guarantee that these double categories are GG. Finally, in \textbf{section 8} we study the problem of universality of the construction presented in section 3 in the specific case of bicategories decorated by groups.

\section{Preliminaries}\label{prelim}

\noindent In this first section we establish the notational and pictorial conventions used throughout the paper. We briefly recall the Grothendieck construction, and we present a brief introduction to GG double categories and vertical length.

\newpage

\noindent\textit{bicategories}

\

\noindent Given a bicategory $\mathcal{B}$ we will write $\mathcal{B}_0,\mathcal{B}_1,\mathcal{B}_2$ for the collections of 0-,1-, and 2-cells of $\mathcal{B}$. We will write $dom^\mathcal{B},codom^\mathcal{B}$ for both the 1- and 2-dimensional domain and codomain functions of $\mathcal{B}$, we will write $id^\mathcal{B}$ for both the 0- and 1-dimensional identity functions of $\mathcal{B}$ and we will write $\circ,\circledast$ for the vertical and the horizontal composition operations in $\mathcal{B}$. We call $dom^\mathcal{B},codom^\mathcal{B},id^\mathcal{B},\circ,\circledast$ the structure data of $\mathcal{B}$. We call the identity transformations and associator the coherence data for $\mathcal{B}$. 0- and 1-cells in a double category are pictorially represented by points and horizontal arrows respectively and 2-morphisms are represented by globe diagrams read from top to bottom, i.e. a diagram as:

\begin{center}
\begin{tikzpicture}
  \matrix (m) [matrix of math nodes,row sep=4em,column sep=4em,minimum width=2em]
  {
     a&b \\};
  \path[-stealth]
    (m-1-1) edge [bend right=45] node [below] {$\beta$}(m-1-2)
            edge [bend left=45] node [above] {$\alpha$} (m-1-2)
            edge [white] node [black][fill=white] {$\varphi$} (m-1-2);
\end{tikzpicture}
\end{center}

\noindent represents a 2-cell from $\alpha$ to $\beta$. The vertical and horizontal composition operations in $\mathcal{B}$ are pictorially represented by vertical and horizontal concatenation of diagrams as above, see \cite{Borceux}. We will adopt the following non-standard convention for identitiy 1-and 2-cells: Given a 0-cell $a$ in a bicategory $\mathcal{B}$ we will represent the identity endomorphism of $a$ in $\mathcal{B}$ by a red arrow from $a$ to $a$. The following diagrams thus represent 2-cells from $id_a$ to $\beta$, from $\psi$ to $id_a$ and an endomorphism $\eta$ of $id_a$:

\begin{center}
\begin{tikzpicture}
  \matrix (m) [matrix of math nodes,row sep=4em,column sep=4em,minimum width=2em]
  {
     a&a&a&a&a&a \\};
  \path[-stealth]
    (m-1-1) edge [bend right=45] node [below] {$\beta$}(m-1-2)
            edge [red,bend left=45] node [above] {} (m-1-2)
            edge [white] node [black][fill=white] {$\varphi$} (m-1-2)
            
            (m-1-3) edge [red,bend right=45] node [below] {}(m-1-4)
            edge [bend left=45] node [above] {$\alpha$} (m-1-4)
            edge [white] node [black][fill=white] {$\psi$} (m-1-4)

            (m-1-5) edge [red,bend right=45] node [below] {}(m-1-6)
            edge [red,bend left=45] node [above] {} (m-1-6)
            edge [white] node [black][fill=white] {$\eta$} (m-1-6);
\end{tikzpicture}
\end{center}

\noindent Other conventions for the pictorial representation of identity endomorphisms in bicategories are \cite{DouglasHenriquesInternal}. We write \textbf{bCat} for the category of bicategories and pseudofunctors. Given a 0-cell $a$ in $\mathcal{B}$ the horizontal composition operation $\circledast$ in $\mathcal{B}$ provides the category of endomorphisms $End_\mathcal{B}(a)$ of $a$ in $\mathcal{B}$ with the structure of a monoidal category. The monoidal unit in $End_\mathcal{B}(a)$ is the identity $id^\mathcal{B}_a$. Conversely, every monoidal category $D$ defines a bicategory, which we will denote as $2D$, with a single object $\ast$. The horizontal composition operation $\circledast$ in $2D$ is the tensor product operation in $D$ and the identity 1-cell $id^\mathcal{B}_\ast$ of the single 0-cell in $\mathcal{B}$ is the monoidal identity \textbf{1}$_D$.

We will further adopt the following convention: Given a bicategory $\mathcal{B}$ we will write $E\mathcal{B}_2$ for the category whose collection of objects is the collection of 1-cells $\alpha$ in $\mathcal{B}$ satisfying the equation $dom^\mathcal{B}(\alpha)=codom^\mathcal{B}(\alpha)$ and whose morphisms are 2-cells in $\mathcal{B}$. Further, we will write $\tilde{B}_2$ for the category whose collection of objects is the compliment in $\mathcal{B}_1$ of the collection of objects in $E\mathcal{B}_2$ and whose morphisms are 2-cells in $\mathcal{B}$. The composition operation in both $E\mathcal{B}_2$ and $\tilde{B}_2$ is the vertical composition operation of $\mathcal{B}$.

\

\noindent \textit{Double categories}

\

\noindent Given a double category $C$ we will write $C_0,C_1$ for the category of objects and the category of morphisms of $C$ respectively, we will write $s,t,i$ for the source, target and horizontal identity functors of $C$, and following \cite{SchulmanDerived} we will write $\boxminus$ for the horizontal composition functor of $C$ and we will write $\boxvert$ for the vertical composition operation of both vertical morphisms and 2-morphisms in $C$. We call $s,t,i,\boxminus$ the structure data for $C$ and we call the identity transformations and associator the coherence data for $C$. We write \textbf{dCat} for the category of double categories and double functors. We represent objects in $C$ pictorially as verteces, horizontal and vertical morphisms as horizontal and vertical arrows and we represent 2-morphisms as squares. We read squares from top to bottom and from left to right, i.e. a diagram as:

\begin{center}
\begin{tikzpicture}
  \matrix (m) [matrix of math nodes,row sep=3em,column sep=3em,minimum width=2em]
  {
     a&b \\
     c&d \\};
  \path[-stealth]
    (m-1-1) edge node [above]{$\alpha$}(m-1-2)
            edge [white] node [black][fill=white]{$\varphi$}(m-2-2)    
    (m-2-1) edge node [below] {$\beta$} (m-2-2)
    (m-1-1) edge node [left]{$f$} (m-2-1)
    (m-1-2) edge node [right]{$g$} (m-2-2);
\end{tikzpicture}

\end{center}

\noindent represents a 2-morphism $\varphi$ such that $s(\varphi)=f,t(\varphi)=g$ and such that the vertical domain and codomain of $\varphi$ are $\alpha$ and $\beta$ respectively. We will adopt a similar non standard convention as the one considered in the previous section when pictorially representing identities. We will represent horizontal identities pictorially as horizontal red arrows. Thus diagrams of the form:

\begin{center}
\begin{tikzpicture}
  \matrix (m) [matrix of math nodes,row sep=3em,column sep=3em,minimum width=2em]
  {
     a&a&a&a&a&a \\
     b&b&b&b&b&b \\};
  \path[-stealth]
    (m-1-1) edge [red]node {}(m-1-2)
            edge [white] node [black][fill=white]{$\varphi$}(m-2-2)    
    (m-2-1) edge node [below] {$\beta$} (m-2-2)
    (m-1-1) edge node [left]{$f$} (m-2-1)
    (m-1-2) edge node [right]{$g$} (m-2-2)
    
   (m-1-3) edge node [above]{$\alpha$}(m-1-4)
            edge [white] node [black][fill=white]{$\psi$}(m-2-4)    
    (m-2-3) edge [red] node [below] {} (m-2-4)
    (m-1-3) edge node [left]{$f$} (m-2-3)
    (m-1-4) edge node [right]{$g$} (m-2-4)
    
    (m-1-5) edge [red] node {}(m-1-6)
            edge [white] node [black][fill=white]{$\eta$}(m-2-6)    
    (m-2-5) edge [red] node {} (m-2-6)
    (m-1-5) edge node [left]{$f$} (m-2-5)
    (m-1-6) edge node [right]{$g$} (m-2-6);
\end{tikzpicture}

\end{center}

\noindent represent squares $\varphi,\psi,\eta$ with vertical domain $i_a$, vertical codomain $i_b$ and vertical domain and codomain $i_a$ and $i_b$ respectively. We represent the horizontal identity $i_f$ of a vertical morphism $f:a\to b$ pictorially as:

\begin{center}
\begin{tikzpicture}
  \matrix (m) [matrix of math nodes,row sep=3em,column sep=3em,minimum width=2em]
  {
     a&a \\
     b&b \\};
  \path[-stealth]
    (m-1-1) edge [red] node {}(m-1-2)
            edge [white] node [black][fill=white]{$i_f$}(m-2-2)    
    (m-2-1) edge [red] node {}  (m-2-2)
    (m-1-1) edge node [left]{$f$} (m-2-1)
    (m-1-2) edge node [right]{$f$} (m-2-2);
\end{tikzpicture}

\end{center}

\noindent Further, we represent vertical identities pictorially with blue vertical arrows. Thus diagrams of the form:

\begin{center}
\begin{tikzpicture}
  \matrix (m) [matrix of math nodes,row sep=3em,column sep=3em,minimum width=2em]
  {
     a&b&a&c&a&b \\
     a&c&b&c&a&b \\};
  \path[-stealth]
    (m-1-1) edge node [above]{$\alpha$}(m-1-2)
            edge [white] node [black][fill=white]{$\varphi$}(m-2-2)    
    (m-2-1) edge node [below] {$\beta$} (m-2-2)
    (m-1-1) edge [blue] node {} (m-2-1)
    (m-1-2) edge node {} (m-2-2)
    
   (m-1-3) edge node [above]{$\alpha$}(m-1-4)
            edge [white] node [black][fill=white]{$\psi$}(m-2-4)    
    (m-2-3) edge node [below] {$\beta$} (m-2-4)
    (m-1-3) edge node {} (m-2-3)
    (m-1-4) edge [blue] node {} (m-2-4)
    
    (m-1-5) edge node [above]{$\alpha$}(m-1-6)
            edge [white] node [black][fill=white]{$\eta$}(m-2-6)    
    (m-2-5) edge node [below]{$\beta$} (m-2-6)
    (m-1-5) edge [blue] node {} (m-2-5)
    (m-1-6) edge [blue] node {} (m-2-6);
\end{tikzpicture}

\end{center}

\noindent represent squares $\varphi,\psi,\eta$ such that $s(\varphi)=id_a,t(\psi)=id_c$ and $s(\eta)=id_a,t(\eta)=id_b$. We represent the vertical and horizontal composition operations $\boxvert,\boxminus$ pictorially as vertical and horizontal concatenation respectively. With our conventions globular squares are precisely those 2-morphisms admitting pictorial representations as:

\begin{center}
\begin{tikzpicture}
  \matrix (m) [matrix of math nodes,row sep=3em,column sep=3em,minimum width=2em]
  {
     a&b \\
     a&b \\};
  \path[-stealth]
    (m-1-1) edge node [above]{$\alpha$}(m-1-2)
            edge [white] node [black][fill=white]{$\varphi$}(m-2-2)    
    (m-2-1) edge node [below] {$\beta$} (m-2-2)
    (m-1-1) edge [blue] node [left]{} (m-2-1)
    (m-1-2) edge [blue] node [right]{} (m-2-2);
\end{tikzpicture}

\end{center}

\

\noindent \textit{Decorated bicategories}

\

\noindent Given a double category $C$ we write $HC$ for the bicategory whose 0-, 1-, and 2-cells are objects, horizontal morphisms, and globular squares of $C$ respectively. The operation $C\mapsto HC$ 'flattens' the double category $C$ into a bicategory by interpreting diagrams in $C$ of the form:

\begin{center}
\begin{tikzpicture}
  \matrix (m) [matrix of math nodes,row sep=3em,column sep=3em,minimum width=2em]
  {
     a&b \\
     a&b \\};
  \path[-stealth]
    (m-1-1) edge node [above]{$\alpha$}(m-1-2)
            edge [white] node [black][fill=white]{$\varphi$}(m-2-2)    
    (m-2-1) edge node [below] {$\beta$} (m-2-2)
    (m-1-1) edge [blue] node [left]{} (m-2-1)
    (m-1-2) edge [blue] node [right]{} (m-2-2);
\end{tikzpicture}

\end{center}

\noindent as diagrams of the form:

\begin{center}
\begin{tikzpicture}
  \matrix (m) [matrix of math nodes,row sep=4em,column sep=4em,minimum width=2em]
  {
     a&b \\};
  \path[-stealth]
    (m-1-1) edge [bend right=45] node [below] {$\beta$}(m-1-2)
            edge [bend left=45] node [above] {$\alpha$} (m-1-2)
            edge [white] node [black][fill=white] {$\varphi$} (m-1-2);
\end{tikzpicture}
\end{center}

\noindent Given a bicategory $\mathcal{B}$ we will say that a category $\mathcal{B}^*$ is a decoration of $\mathcal{B}$ if the collection of objects of $\mathcal{B}^*$ is the collection of 0-cells of $\mathcal{B}$. In this case we will say that the pair $(\mathcal{B}^*,\mathcal{B})$ is a decorated bicategory. The pair $(C_0,HC)$ is a decorated bicategory for every double category $C$. We write $H^*C$ for this decorated bicategory. We call $H^*C$ the decorated horizontalization of $C$. We write \textbf{bCat}$^*$ for the category of decorated bicategories and decorated pseudofunctors, where given decorated bicategories $(\mathcal{B}^*,\mathcal{B}),(\mathcal{B}'^*,\mathcal{B}')$ we understand for a decorated pseudofunctor from $(\mathcal{B}^*,\mathcal{B})$ to $(\mathcal{B}'^*,\mathcal{B}')$ a pair $(F^*,F)$ where $F^*:\mathcal{B}^*\to\mathcal{B}'^*$ and $F:\mathcal{B}\to \mathcal{B}'$ and such that $F^*,F$ coincide in $\mathcal{B}_0$. We are interested in the following problem:

\begin{prob}\label{prob}
Let $(\mathcal{B}^*,\mathcal{B})$ be a decorated bicategory. Find double categories $C$ satisfying the equation $H^*C=(\mathcal{B}^*,\mathcal{B})$.
\end{prob}

\noindent We understand problem \ref{prob} as the problem of lifting a bicategory $\mathcal{B}$ to a double category through an orthogonal direction, provided by $\mathcal{B}^*$. We call solutions to this problem internalizations of $(\mathcal{B}^*,\mathcal{B})$. Pictorially, solutions to problem \ref{prob} are to be understood as ways to formally understand diagrams of the form:

\begin{center}
\begin{tikzpicture}
  \matrix (m) [matrix of math nodes,row sep=4em,column sep=4em,minimum width=2em]
  {
     a&b \\};
  \path[-stealth]
    (m-1-1) edge [bend right=45] node [below] {$\beta$}(m-1-2)
            edge [bend left=45] node [above] {$\alpha$} (m-1-2)
            edge [white] node [black][fill=white] {$\varphi$} (m-1-2);
\end{tikzpicture}
\end{center}

\noindent as diagrams of the form:

\begin{center}
\begin{tikzpicture}
  \matrix (m) [matrix of math nodes,row sep=3em,column sep=3em,minimum width=2em]
  {
     a&b \\
     a&b \\};
  \path[-stealth]
    (m-1-1) edge node [above]{$\alpha$}(m-1-2)
            edge [white] node [black][fill=white]{$\varphi$}(m-2-2)    
    (m-2-1) edge node [below] {$\beta$} (m-2-2)
    (m-1-1) edge [blue] node [left]{} (m-2-1)
    (m-1-2) edge [blue] node [right]{} (m-2-2);
\end{tikzpicture}

\end{center}

\noindent within a double category, taking non-identity, i.e. non-blue vertical morphims into account. See \cite{Bartels1,Bartels2} for applications of problem \ref{prob} in the case of decorated bicategories of von Neumann algebras to the theory of invariants of manifolds of dimension 3.


\

\noindent \textit{The Grothendieck construction}

\

\noindent We use the Grothendieck construction to provide solutions to problem \ref{prob}. The initial input of the Grothendieck construction is a category $C$ and a functor $\Phi:C\to \mbox{\textbf{Cat}}$. The Grothendieck construction associates to $\Phi$ a new category $\int_C{\Phi}$ which we now describe.

\begin{definition}\label{Grothendieckdefi}
Let $C$ be a category. Let $\Phi:C\to \mbox{\textbf{Cat}}$ be a functor. The category $\int_C{\Phi}$ is defined as follows:

\begin{enumerate}
    \item \textbf{Objects:} The collection of objects of $\int_C\Phi$ is the collection of all pairs $(x,a)$ where $x$ is a object in $C$ and $a$ is an object in $\Phi(x)$. 
    \item \textbf{Morphisms:} Let $(x,a),(x',a')$ be objects in $\int_C\Phi$. The collection of morphisms, in $\int_C\Phi$, from $(x,a)$ to $(x',a')$ is the collection of all pairs $(\alpha,\beta)$ where $\alpha$ is a morphism, in $C$, from $x$ to $x'$ and where $\beta$ is a morphism, in $\Phi(x')$, from $\Phi(\alpha)(x)$ to $x'$.
    
\item \textbf{Composition:} Let $(x,a),(x',a'),(x'',a'')$ be objects in $\int_C{\Phi}$. Let $(\alpha,\beta)$ and $(\alpha',\beta')$ be morphisms, in $\int_C\Phi$, from $(x,a)$ to $ (x'.a')$ and from $(x',a')\to (x'',a'')$ respectively. The composition  $(\alpha',\beta')(\alpha,\beta)$ is defined by the equation

\[(\alpha',\beta')(\alpha,\beta)=(\alpha'\alpha,\beta'\Phi(\alpha')(\beta))\]
\end{enumerate} 
\end{definition}

\

\noindent \textit{The GG piece and vertical length}

\

\noindent We briefly recall the basics of the theory of globularily generated double categories and their vertical length. We refer the reader to \cite{yo1,yo2} for an account on the subject. We say that a double category $C$ is globularily generated or GG for short, if it is generated, as a double category, by its collection of globular squares, i.e. a double category $C$ is GG if $C$ is generated by squares of the form:

\begin{center}
\begin{tikzpicture}
  \matrix (m) [matrix of math nodes,row sep=3em,column sep=3em,minimum width=2em]
  {
     a&b&a&a \\
     a&b&b&b \\};
  \path[-stealth]
    (m-1-1) edge [red]node {}(m-1-2)
            edge [white] node [black][fill=white]{$\varphi$}(m-2-2)    
    (m-2-1) edge [red] node {} (m-2-2)
    (m-1-1) edge node [left]{$f$} (m-2-1)
    (m-1-2) edge node [right]{$g$} (m-2-2)
    
   (m-1-3) edge node [above]{$\alpha$}(m-1-4)
            edge [white] node [black][fill=white]{$\psi$}(m-2-4)    
    (m-2-3) edge node [below] {$\beta$} (m-2-4)
    (m-1-3) edge [blue] node {} (m-2-3)
    (m-1-4) edge [blue] node {} (m-2-4);
\end{tikzpicture}

\end{center}

\noindent The category of morphisms of a GG category $C$ admits a presentation as a limit $C_1=\varinjlim V^k_C$ where the categories $V^k_C$ are defined inductively by setting $V^0_C=HC_1$ and by making $V^k_C$ be the category generated by horizontal compositions of morphisms in $V^{k-1}_C$ for every $k\geq 1$. We define the vertical length $\ell C$ of $C$ as the minimal $k\geq 1$ such that $C_1=V^k_C$ if such $k$ exists. We make $\ell C=\infty$ otherwise. The vertical length of a GG categories is to be understood as a measure of complexity on the relations between the vertical and horizontal composition operations $\boxvert,\boxminus$ in $C$.

For a general i.e. non-GG double category we write $\gamma C$ for the sub-double category of $C$ generated by 2.morphisms admitting a diagramatic representation of the form:

\begin{center}
\begin{tikzpicture}
  \matrix (m) [matrix of math nodes,row sep=3em,column sep=3em,minimum width=2em]
  {
     a&b&a&a \\
     a&b&b&b \\};
  \path[-stealth]
    (m-1-1) edge [red]node {}(m-1-2)
            edge [white] node [black][fill=white]{$\varphi$}(m-2-2)    
    (m-2-1) edge [red] node {} (m-2-2)
    (m-1-1) edge node [left]{$f$} (m-2-1)
    (m-1-2) edge node [right]{$g$} (m-2-2)
    
   (m-1-3) edge node [above]{$\alpha$}(m-1-4)
            edge [white] node [black][fill=white]{$\psi$}(m-2-4)    
    (m-2-3) edge node [below] {$\beta$} (m-2-4)
    (m-1-3) edge [blue] node {} (m-2-3)
    (m-1-4) edge [blue] node {} (m-2-4);
\end{tikzpicture}

\end{center}

\noindent Thus defined $\gamma C$ is such that $H^*\gamma C=H^*C$. Moreover $\gamma C$ is contained in every sub-double category of $C$ satisfying this condition. Double categories of the form $\gamma C$ are always GG and every GG double category is of this form. GG categories are thus considered as minimal solutions to problem \ref{prob}. We regard the theory of GG categories as lying 'in between' the theory of double cateogries and the theory of bicategories. We define the vertical length $\ell C$ of a non-GG double category $C$ by $\ell \gamma C$.

\

\section{The main construction}

\noindent We use a special instance of the Grothendieck construction in order to produce internalizations of decorated bicategories. The special situation we consider is as follows:

\

\noindent Let $(\mathcal{B}^*,\mathcal{B})$ be a decorated bicategory. We consider the Grothendieck construction with input data given by a functor $\Phi:\mathcal{B}^*\to \mbox{\textbf{Cat}}$ such that for every object $a$ of $\mathcal{B}^*$ the category $\Phi(a)$ is equal to $End_\mathcal{B}(a)$ and such that for every pair of objects $a,b$ in $\mathcal{B}^*$ and for every $\alpha:a\to b$ in $\mathcal{B}^*$ the functor $\Phi_\alpha:End_\mathcal{B}(a)\to End_\mathcal{B}(b)$ is monoidal. That is, we consider the Grothendieck construction $\int_{\mathcal{B}^*}\Phi$ for functorial extensions $\Phi:\mathcal{B}^*\to\mbox{\textbf{Cat}}^\otimes$ of the function associating to every object $a$ in $\mathcal{B}^*$ the endomorphism category $End_\mathcal{B}(a)$. For our purposes we consider the following extension of $\int_{\mathcal{B}^*}\Phi$ in the situation described above: We write $\int^\ast_{\mathcal{B}^*}\Phi$ for the category obtained as the disjoint union of $\int_{\mathcal{B}^*}\Phi$ and $\tilde{B}_2$. We prove the following theorem.


\

\begin{thm}\label{thmmain}
In the situation above the pair $(\mathcal{B}^*,\int^*_{\mathcal{B}^*}\Phi)$ admits the structure of a double category. Writing $C^\Phi$ for this double category, $C^\Phi$ satisfies the equation:

\[H^*C^\Phi=(\mathcal{B}^*,\mathcal{B})\]

\end{thm}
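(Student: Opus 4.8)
The plan is to construct the double category $C^\Phi$ explicitly by declaring its constituent data and then verifying the double category axioms, organizing the work so that the key identities reduce to already-known facts about the Grothendieck construction (Definition \ref{Grothendieckdefi}) and about the monoidal structure on the fibers $End_\mathcal{B}(a)$. First I would set $C^\Phi_0 = \mathcal{B}^*$ as the category of objects and $C^\Phi_1 = \int^*_{\mathcal{B}^*}\Phi$ as the category of morphisms, and define the source and target functors $s,t\colon C^\Phi_1 \to C^\Phi_0$. On the $\int_{\mathcal{B}^*}\Phi$ summand an object is a pair $(a,\alpha)$ with $a\in\mathcal{B}^*_0=\mathcal{B}_0$ and $\alpha\in End_\mathcal{B}(a)$; I would declare $s(a,\alpha)=t(a,\alpha)=a$, reflecting that a horizontal $1$-cell of the form $id_a$-loop sits over a single object. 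On the $\tilde B_2$ summand an object is a $1$-cell $\alpha$ of $\mathcal{B}$ with $dom^\mathcal{B}\alpha\neq codom^\mathcal{B}\alpha$, and I set $s\alpha = dom^\mathcal{B}\alpha$, $t\alpha = codom^\mathcal{B}\alpha$. The horizontal identity functor $i\colon C^\Phi_0\to C^\Phi_1$ sends $a\mapsto (a, id^\mathcal{B}_a)$ (which lands in the $\int$ summand since $id^\mathcal{B}_a$ is the monoidal unit of $End_\mathcal{B}(a)$), and $f\colon a\to b$ to the morphism $(f,\beta)$ with $\beta$ the canonical $2$-cell witnessing $\Phi_f(id_a)\cong id_b$ coming from monoidality of $\Phi_f$.

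Next I would define the horizontal composition functor $\boxminus$. On objects: for composable $1$-cells it must combine $(a,\alpha)$ and $(a,\alpha')$ into $(a,\alpha'\circledast\alpha)$ using the monoidal (tensor) product in $End_\mathcal{B}(a)$; for mixed cases (one loop, one genuine non-loop $1$-cell, or two non-loops composing to a loop or non-loop) it is just horizontal composition $\circledast$ in $\mathcal{B}$, with the bookkeeping of which summand the result lands in determined by whether domain equals codomain. On morphisms (the $2$-morphisms/squares of $C^\Phi$): a square from $(a,\alpha)$ to $(a,\beta)$ with left edge $f$ and right edge $g$ should be, on the $\int$ part, exactly a morphism $(f,\varphi)\colon (a,\alpha)\to(b,\beta)$ in $\int_{\mathcal{B}^*}\Phi$, i.e. $f\colon a\to b$ in $\mathcal{B}^*$ together with a $2$-cell $\varphi\colon \Phi_f(\alpha)\to\beta$ in $End_\mathcal{B}(b)$ — but since $s=t$ here, the square's left and right edges are forced equal, which is consistent with the picture of loop-$1$-cells. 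The horizontal composite of two such squares is given by the Grothendieck composition formula $(\alpha'\alpha,\beta'\Phi(\alpha')(\beta))$ from Definition \ref{Grothendieckdefi}, suitably transcribed; I would verify functoriality of $\boxminus$ by appealing to functoriality of $\int_{\mathcal{B}^*}\Phi$ together with the fact that each $\Phi_f$ is a monoidal functor (so it commutes with $\circledast$ up to the coherent monoidal constraints). The vertical composition $\boxvert$ of squares and of vertical morphisms is simply composition in the categories $C^\Phi_1$ and $C^\Phi_0$, inherited directly from composition in $\int^*_{\mathcal{B}^*}\Phi$ and in $\mathcal{B}^*$, so it is automatically associative and unital.

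The associator and unitor coherence $2$-morphisms of $C^\Phi$ would be assembled from the associativity and unit constraints of the monoidal categories $End_\mathcal{B}(a)$ (equivalently, the associator and unitors of $\mathcal{B}$) on the loop part, and from the associator and unitors of $\mathcal{B}$ on the non-loop part; these are globular squares. I would then check the pentagon and triangle identities for $C^\Phi$ — these follow from the corresponding bicategory coherence axioms in $\mathcal{B}$ and the monoidal functor coherence axioms for the $\Phi_f$'s, plus naturality of everything in sight. For the second assertion, $H^*C^\Phi=(\mathcal{B}^*,\mathcal{B})$: by construction $C^\Phi_0=\mathcal{B}^*$ gives the decoration, so it remains to identify the horizontal bicategory $HC^\Phi$ with $\mathcal{B}$. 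Its $0$-cells are objects of $C^\Phi_0=\mathcal{B}_0$; its $1$-cells are objects of $C^\Phi_1$, which by the disjoint-union definition of $\int^*_{\mathcal{B}^*}\Phi$ is the disjoint union of the loop $1$-cells (objects of $E\mathcal{B}_2$, i.e. the $\alpha$ with $dom^\mathcal{B}\alpha=codom^\mathcal{B}\alpha$) and the non-loop $1$-cells ($\tilde B_2$) — together exactly $\mathcal{B}_1$; and its $2$-cells are the globular squares, which are precisely those morphisms in $C^\Phi_1$ with identity (blue) side edges — on the $\int$ part a morphism $(id_a,\varphi)$ is just a $2$-cell $\varphi\colon\alpha\to\beta$ in $End_\mathcal{B}(a)$ since $\Phi_{id_a}=id$, and on the $\tilde B_2$ part a morphism with identity sides is a $2$-cell in $\mathcal{B}$ — so altogether $\mathcal{B}_2$, with horizontal composition of globular squares reducing (because $\Phi_{id}=\mathrm{id}$ and monoidality) to $\circledast$ and vertical composition to $\circ$. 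I expect the main obstacle to be the bookkeeping in the definition and functoriality of $\boxminus$ at the interface between the two summands — in particular making the choices of monoidal-constraint $2$-cells globally coherent so that the pentagon/triangle for $C^\Phi$ really do follow formally; the Grothendieck-construction part itself is routine once the monoidality hypothesis on $\Phi$ is in force.
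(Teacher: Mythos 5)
Your overall plan --- take $C^\Phi_0=\mathcal{B}^*$ and $C^\Phi_1=\int^*_{\mathcal{B}^*}\Phi$, and reduce the double-category axioms to facts about the Grothendieck construction together with monoidality of the fibre functors $\Phi_f$ --- is the same as the paper's. However, there is a genuine confusion at the decisive step. You write that the horizontal composite of two squares on the $\int$-summand ``is given by the Grothendieck composition formula $(\alpha'\alpha,\beta'\Phi(\alpha')(\beta))$.'' That formula is the composition law \emph{inside} the category $\int_{\mathcal{B}^*}\Phi$, i.e.\ it defines the \emph{vertical} composition $\boxvert$ in $C^\Phi_1$ --- exactly as you correctly say a few lines later. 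The \emph{horizontal} composition $\boxminus$ on squares must be something else: for two horizontally adjacent morphisms $(f,\varphi)$ and $(f,\psi)$ of $\int_{\mathcal{B}^*}\Phi$ (same $f$, since $s=t$ forces the shared vertical edge), one sets $(f,\varphi)\boxminus(f,\psi)=(f,\varphi\circledast\psi)$, using the tensor product in the fibre $End_\mathcal{B}(b)$. You correctly gave this tensor-product rule at the object level, but then reverted to the wrong (vertical) formula at the morphism level. This is not a cosmetic slip: the whole reason $\Phi_f$ is required to be monoidal, and not merely a functor, is so that this $\boxminus$ satisfies interchange. Interchange on the $\int$-part reduces to
\[
(\varphi'\circledast\psi')\,\Phi_{f'}(\varphi\circledast\psi)=\bigl(\varphi'\,\Phi_{f'}(\varphi)\bigr)\circledast\bigl(\psi'\,\Phi_{f'}(\psi)\bigr),
\]
which holds because $\Phi_{f'}$ preserves $\circledast$ together with the interchange law in $End_\mathcal{B}(b')$. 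Your later remark that functoriality of $\boxminus$ follows from monoidality of $\Phi_f$ shows you have the right intuition, but it is in tension with the formula you actually wrote down.

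A secondary point: in the paper $\mbox{\textbf{Cat}}^\otimes$ consists of \emph{strict} monoidal functors (cf.\ the linear variant in the observation following the theorem), so $\Phi_f(id^\mathcal{B}_a)=id^\mathcal{B}_b$ holds on the nose and $i_f$ is simply $(f,id^\mathcal{B}_{i_b})$. Your hedges about the ``canonical $2$-cell witnessing $\Phi_f(id_a)\cong id_b$'' and about making ``monoidal-constraint $2$-cells globally coherent'' are solving a coherence problem the theorem, as stated, does not have; allowing non-strict $\Phi_f$ would require a genuinely different and harder coherence argument than the one the paper (or you) gives.
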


\begin{proof}
Let $(\mathcal{B}^*,\mathcal{B})$ be a decorated bicategory. Let $\Phi:\mathcal{B}^*\to\mbox{\textbf{Cat}}^\otimes$ be a functor such that $\Phi(a)$ is equal to $End_\mathcal{B}(a)$ for every object $a$ in $\mathcal{B}^*$. We wish to prove that in this case the pair $C^\Phi=(\mathcal{B}^*,\int^*_{\mathcal{B}^*}\Phi)$ admits a structure of double category such that with this structure $C^\Phi$ internalizes $(\mathcal{B}^*,\mathcal{B})$. We begin by defining the structure functors of $C^\Phi$.

First observe that the collection of objects of $\int_{\mathcal{B}^*}\Phi$ is the collection of all pairs of the form $(a,\alpha)$ where $a$ is a 0-cell of $\mathcal{B}$ and where $\alpha$ is an endomorphism of $a$ in $\mathcal{B}$. Identifying every pair $(a,\alpha)$ with the endomorphism $\alpha$ we identify the collection of objects of $\int_{\mathcal{B}^*}\Phi$ with the collection of 1-cells $\alpha$ of $\mathcal{B}$ such that $s\alpha=t\alpha$. That is, we identify the collection of objects of $\int_{\mathcal{B}^*}\Phi$ with the compliment in $\mathcal{B}_1$ of the collection of objects of $\tilde{B}_1$. The collection of objects of $\int^*_{\mathcal{B}^*}\Phi$ can thus be identified with $\mathcal{B}_1$. We assume this identification has been performed and we thus say that the collection of objects of $\int^*_{\mathcal{B}^*}\Phi$ is equal to $\mathcal{B}_1$.

We define source and target functors $s,t:\int^*_{\mathcal{B}^*}\Phi\to \mathcal{B}^*$ for $C^\Phi$. We make the object functions of $s,t$ to be equal to the restrictions, to $\mathcal{B}_1$, of the domain and codomain functions $dom^\mathcal{B},codom^\mathcal{B}$ functions of $\mathcal{B}$. Let $(f,\varphi)$ be a morphism in $\int_{\mathcal{B}^*}\Phi$. In that case we make both $s(f,\varphi)$ and $t(f,\varphi)$ to be equal to the morphism $f$ in $\mathcal{B}^*$. Let now $\varphi$ be a morphism in $\tilde{\mathcal{B}}_1$. Suppose that $\varphi$ admits a pictorial representation, in $\mathcal{B}$, as:

\begin{center}
\begin{tikzpicture}
  \matrix (m) [matrix of math nodes,row sep=4em,column sep=4em,minimum width=2em]
  {
     a&b \\};
  \path[-stealth]
    (m-1-1) edge [bend right=45] node [below] {$\beta$}(m-1-2)
            edge [bend left=45] node [above] {$\alpha$} (m-1-2)
            edge [white] node [black][fill=white] {$\varphi$} (m-1-2);
\end{tikzpicture}
\end{center}

\noindent In that case we make $s(\varphi)=id_a$ and $t(\varphi)=id_b$. The fact that thus defined $s,t$ indeed define functors from $\int^\ast_{\mathcal{B}^*}\Phi$ to $\mathcal{B}^*$ follows from the way composition in $\int_{\mathcal{B}^*}\Phi$ is defined, see section \ref{prelim}, and by the fact that $\mathcal{B}$ is a bicategory. We thus represent morphisms $\varphi$ in $\tilde{B}_1$ and morphisms $(f,\varphi)$ in $\int_{\mathcal{B}^*}\Phi$ pictorially as:

\begin{center}
\begin{tikzpicture}
  \matrix (m) [matrix of math nodes,row sep=3em,column sep=3em,minimum width=2em]
  {
     a&b & b&b \\
     a&b & a&a \\};
  \path[-stealth]
    (m-1-1) edge node [above]{$\beta$}(m-1-2)
            edge [white] node [black][fill=white]{$\varphi$}(m-2-2)    
    (m-2-1) edge node [below] {$\alpha$} (m-2-2)
    (m-1-1) edge [blue] node {} (m-2-1)
    (m-1-2) edge [blue] node {} (m-2-2)
    
    (m-1-3) edge node [above]{$\beta$}(m-1-4)
            edge [white] node [black][fill=white]{$(f,\varphi)$}(m-2-4)    
    (m-2-3) edge node [below] {$\alpha$} (m-2-4)
    (m-1-3) edge node [left]{$f$} (m-2-3)
    (m-1-4) edge node [right]{$f$} (m-2-4);
\end{tikzpicture}

\end{center}

\noindent respectively. We now define a horizontal identity funtor for $C^\Phi$.

Let $a$ be an object of $\mathcal{B}^*$. We make the horizontal identity $i_a$ of $a$, in $C^\Phi$, to be equal to the horizontal identity $id^\mathcal{B}_a$ of $a$ in $\mathcal{B}$. Let $f:a\to b$ be a morphism in $\mathcal{B}^*$. In that case we make $i_f$ to be equal to the morphism $(f,id^\mathcal{B}_{i_b})$ of $\int^*_{\mathcal{B}^*}\Phi$. The fact that thus defined $i$ indeed defines a functor from $\mathcal{B}^\ast$ to $\int^\ast_{\mathcal{B}^\ast}\Phi$ again easily follows from the way the composition operation in $\int_{\mathcal{B}^*}\Phi$ is defined. An easy check proves that $i$ is compatible with the functors $s,t$ defined above. We represent the horizontal identity $(f,id^\mathcal{B}_{i_b})$ of a morphism $f:a\to b$ in $\mathcal{B}^*$ pictorially as:

\begin{center}
\begin{tikzpicture}
  \matrix (m) [matrix of math nodes,row sep=3em,column sep=3em,minimum width=2em]
  {
     b&b \\
     a&a \\};
  \path[-stealth]
    (m-1-1) edge [red] node {}(m-1-2)
            edge [white] node [black][fill=white]{$i_f$}(m-2-2)    
    (m-2-1) edge [red] node {} (m-2-2)
    (m-1-1) edge node [left]{$f$} (m-2-1)
    (m-1-2) edge node [right]{$f$} (m-2-2);
\end{tikzpicture}

\end{center}

\noindent In order to define a horizontal composition functor on $C^\Phi$ we first introduce a notational modification on the collection of morphisms of $\int^*_{\mathcal{B}^*}\Phi$. We will write $(f,f,\varphi)$ for every morphsim $(f,\Phi)$ in $\int_{\mathcal{B}^*}\Phi$ and we will write $(id_a,id_b,\varphi)$ for every 2-cell $\varphi$ in $\tilde{\mathcal{B}}_1$ such that the 0-dimensional domain and codomain of $\varphi$ are equal to $a$ and $b$ respectively. Under this convention the collection of morphisms of $\int^*_{\mathcal{B}^*}\Phi$ is the collection of all triples $(f,g,\varphi)$ where $f,g$ are morphisms in $\mathcal{B}^*$ and where $\varphi$ is a 2-cell in $\mathcal{B}$. If the morphisms $f,g$ are not the identities, in $\mathcal{B}^*$, of the 0-dimensional domain and codomain of $\varphi$ respectively, then $f$ and $g$ are equal.

With this notational convention in place we now define a horizontal composition functor $\boxminus$ for $C^\Phi$. We make $\boxminus$ to be defined as the horizontal composition operation of $\mathcal{B}$ on the collection of objects of $\int^*_{\mathcal{B}^*}\Phi$. In order to define an extension of this operation to a functor $\boxminus:\int^*_{\mathcal{B}^*}\Phi\times_{\mathcal{B}^*}\int^*_{\mathcal{B}^*}\Phi\to\int^*_{\mathcal{B}^*}\Phi$ we first analize the morphism of $\int^*_{\mathcal{B}^*}\Phi\times_{\mathcal{B}^*}\int^*_{\mathcal{B}^*}\Phi$. Let $(f_i,g_i,\varphi_i)$ with $i=1,2$ be a morphism in $\int^*_{\mathcal{B}^*}\Phi\times_{\mathcal{B}^*}\int^*_{\mathcal{B}^*}\Phi$. Observe that by the way the functors $s,t$ were defined if one of $f_1,f_2,g_1,g_2$ is not an identity morphism of $\mathcal{B}^*$ then none of $f_1,f_2,g_1,g_2$ are identity morphisms in $\mathcal{B}^*$ and moreover in that case $f_1,f_2,g_1,g_2$ are all equal. A pair $(f_i,g_i,\varphi_i)$ of morphisms in $\int^*_{\mathcal{B}^*}\Phi$ is thus a morphism in $\int^*_{\mathcal{B}^*}\Phi\times_{\mathcal{B}^*}\int^*_{\mathcal{B}^*}\Phi$ if either all of the $f_i,g_i$ with $i=1,2$ are equal or all of the $f_i,g_i$ with $i=1,2$ are identity morphisms in $\mathcal{B^*}$, in which case the pair $(f_i,g_i,\varphi_i)$ with $i=1,2$ is actually horizontally composable in $\mathcal{B}$, i.e. is a 2-cell in $\mathcal{B}_2\times_{\mathcal{B}_0}\mathcal{B}_2$. We define the horizontal composition of pairs satisfying this last condition as their horizontal composition in $\mathcal{B}$. In order to define a horizontal composition functor $\boxminus$ we now need only to define the horizontal composition of morhisms in $\int_{\mathcal{B}^*}\Phi\times_{\mathcal{B}^*}\int_{\mathcal{B}^*}¨\Phi$. Let $(f,\varphi),(f,\psi)$ be a pair in $\int_{\mathcal{B}^*}\Phi\times_{\mathcal{B}^*}\int_{\mathcal{B}^*}¨\Phi$. We make the horizontal composition $(f,\varphi)\boxminus(f,\psi)$ to be the pair $(f,\varphi\circledast\psi)$ where $\phi\circledast\psi$ denotes the horizontal composition, in $\mathcal{B}$, of the composable par of 2-cells $(\varphi,\psi)$. We prove that thus defined the operation of horizontal composition $\boxminus$ does indeed define a functor $\int^*_{\mathcal{B}^*}\Phi\times_{\mathcal{B}^*}\int^*_{\mathcal{B}^*}\Phi\to \int^*_{\mathcal{B}^*}\Phi$.

In order to prove the functoriality of the operation $\boxminus$ defined above we need to prove that the two possible compositions of compatible squares as in the following diagram:

\begin{center}
\begin{tikzpicture}
  \matrix (m) [matrix of math nodes,row sep=3em,column sep=3em,minimum width=2em]
  {
     a&b & b&c \\
     a'&b' & b'&c' \\
      a'&b'& b'&c' \\
     a''&b''& b''&c''\\};
  \path[-stealth]
    (m-1-1) edge node {}(m-1-2)
            edge [white]node [black][fill=white]{$\varphi$}(m-2-2)    
    (m-2-1) edge node [below] {$\alpha$} (m-2-2)
    (m-1-1) edge node {} (m-2-1)
    (m-1-2) edge node [right] {$f$} (m-2-2)
    
    (m-1-3) edge node [above]{}(m-1-4)
            edge [white]node [black][fill=white]{$\psi$}(m-2-4)    
    (m-2-3) edge node [below] {$\beta$} (m-2-4)
    (m-1-3) edge node [left]{$f$} (m-2-3)
    (m-1-4) edge node {} (m-2-4)

    (m-3-1) edge node [above]{$\alpha$}(m-3-2)
            edge [white]node [black][fill=white]{$\varphi'$}(m-4-2)    
    (m-4-1) edge node {} (m-4-2)
    (m-3-1) edge node {} (m-4-1)
    (m-3-2) edge node [right] {$g$} (m-4-2)
    
    (m-3-3) edge node [above]{$\beta$}(m-3-4)
            edge [white]node [black][fill=white]{$\psi'$}(m-4-4)    
    (m-4-3) edge node {} (m-4-4)
    (m-3-3) edge node [left]{$g$} (m-4-3)
    (m-3-4) edge node {} (m-4-4);
\end{tikzpicture}

\end{center}

\noindent are equal. From the fact that $\mathcal{B}$ is a bicategory and thus satisfies the exchange lemma the above diagrammatic equation is true whenever the morphisms involved are 2-cells in $\mathcal{B}$, i.e. when all the $f_i,g_i$ with $i=1,2,3$ are identity morphisms in $\mathcal{B}^*$. We now prove that the above equation is true for morphisms in $\int_{\mathcal{B}^*}\Phi$. Let $(f,\varphi),(f,\psi)$ and $(f',\varphi'),(f',\psi')$ be morphisms in $\int_{\mathcal{B}^*}\Phi$ fitting in the diagram:

\begin{center}
\begin{tikzpicture}
  \matrix (m) [matrix of math nodes,row sep=3em,column sep=3em,minimum width=2em]
  {
     a&a & a&a \\
     b&b & b&b \\
     b&b & b&b \\
     c&c& c&c\\};
  \path[-stealth]
    (m-1-1) edge node [above]{$\alpha$}(m-1-2)
            edge [white]node [black][fill=white]{$(f,\varphi)$}(m-2-2)    
    (m-2-1) edge node [below] {$\beta$} (m-2-2)
    (m-1-1) edge node [left]{$f$} (m-2-1)
    (m-1-2) edge node [right] {$f$} (m-2-2)
    
    (m-1-3) edge node [above]{$\gamma$}(m-1-4)
            edge [white]node [black][fill=white]{$(f,\psi)$}(m-2-4)    
    (m-2-3) edge node [below] {$\eta$} (m-2-4)
    (m-1-3) edge node [left]{$f$} (m-2-3)
    (m-1-4) edge node [right]{$f$} (m-2-4)

    (m-3-1) edge node [above]{$\beta$}(m-3-2)
            edge [white]node [black][fill=white]{$(f',\varphi')$}(m-4-2)    
    (m-4-1) edge node [below]{$\nu$} (m-4-2)
    (m-3-1) edge node [left]{$f'$} (m-4-1)
    (m-3-2) edge node [right] {$f'$} (m-4-2)
    
    (m-3-3) edge node [above]{$\eta$}(m-3-4)
            edge [white]node [black][fill=white]{$(f'\psi')$}(m-4-4)    
    (m-4-3) edge node [below]{$\epsilon$} (m-4-4)
    (m-3-3) edge node [left]{$f'$} (m-4-3)
    (m-3-4) edge node [right]{$f'$} (m-4-4);
\end{tikzpicture}

\end{center}

\noindent We wish to prove that in this case the two possible compositions of the diagramatic scheme above are equal. That is, we wish to prove that the following equation holds:

\[ [(f',\varphi')\boxminus(f',\psi')]\boxvert[(f,\varphi)\boxminus(f,\psi)]=
 [(f',\varphi')\boxvert(f,\varphi)]\boxminus[(f',\psi')\boxvert(f,\psi)]\]

\noindent The left hand side of the above equation is equal to the composition 

\[(f',\varphi'\circledast\psi')\boxvert(f,\varphi\circledast\psi)\]

\noindent which in turn is equal to the pair

\[(f'f,(\varphi'\circledast\psi')\Phi_{f'}(\varphi\circledast \psi))\]

\noindent Now the right hand side of the above equation is equal to the horizontal composition

\[(f'f,\varphi'\Phi_{f'}\varphi)\boxminus(f'f,\psi'\Phi_{f'}\psi)\]

\noindent which in turn is equal to the pair

\[(f'f,[\varphi'\Phi_{f'}\varphi]\circledast[\psi'\Phi_{f'}\psi])\]

\noindent In order to prove the desired equation we thus need to prove the equality:

\[(\varphi'\circledast \psi')\Phi_{f'}(\varphi\circledast\psi)=[\varphi'\Phi_{f'}\varphi]\circledast[\psi'\Phi_{f'}\psi]\]

\noindent but this follows from the fact that $\Phi$ is a functor on \textbf{Cat}$^\otimes$. The horizontal composition operation $\boxminus$ defined above is thus functorial not only on $\tilde{\mathcal{B}}_2$ but is also functorial on $\int_{\mathcal{B}^*}\Phi$. We conclude that the horizontal composition operation $\boxminus$ defined above is a functor from $\int^*_{\mathcal{B}^*}\Phi\times_{\mathcal{B}^*}\int^*_{\mathcal{B}^*}\Phi$ to $\int^*_{\mathcal{B}^*}\Phi$.

From the way they were defined it is easily seen that the surce and target functors $s$ and $t$ of $C^\Phi$ are compatible with the horizontal composition functor $\boxminus$ of $C^\Phi$. Moreover, the left and right unit transformations and the associator of $C$ are easily seen to lift to compatible left and right horizontal identity transformations and associators for $C^\Phi$. We conclude that as defined above $C^\Phi$ is a double category. 

Finally, the category of objects $C^\Phi_0$ of $C^\Phi$ is equal to $\mathcal{B}^*$, the collection of objects of the category of morphisms $C^\Phi_1$ of $C^\Phi$ is the collection of 1-cells $\mathcal{B}_1$ of $\mathcal{B}$. Now the collection of globular squares of $C^\Phi$ is equal to the set of squares $(f,\varphi)$ in $\int^*_{\mathcal{B}^*}\Phi$ such that both $s(f,\varphi)$ and $t(f,\varphi)$ are identitiy morphisms in $\mathcal{B}^*$. This set is precisely the collection of 2-cells $\mathcal{B}_2$ of $\mathcal{B}$. By the way the double category $C^\Phi$ was constructed the restriction of the structure functors defining $C^\Phi$, i.e. source, target, horizontal identity and horizontal composition functors, to cells in $\mathcal{B}$ are precisely the corresponding structure functions for $\mathcal{B}$.  This proves that $C^\Phi$ satisfies the equation $H^*C^\Phi=(\mathcal{B}^*,\mathcal{B})$. This concludes the proof.

\end{proof}

\noindent The following corollary to theorem \ref{thmmain} says that problem \ref{prob} admits solutions for every decorated bicategory $(\mathcal{B}^*,\mathcal{B})$.

\begin{cor}
Let $(\mathcal{B}^*,\mathcal{B})$ be a decorated bicategory. There exists a double category $C$ satisfying the equation $H^*C=(\mathcal{B}^*,\mathcal{B})$.
\end{cor}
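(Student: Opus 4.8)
The statement follows from Theorem~\ref{thmmain} as soon as one exhibits, for the given decorated bicategory $(\mathcal{B}^*,\mathcal{B})$, a single functor $\Phi:\mathcal{B}^*\to\mathbf{Cat}^\otimes$ with $\Phi(a)=End_\mathcal{B}(a)$ for every object $a$ of $\mathcal{B}^*$; the theorem then produces the double category $C=C^\Phi$ satisfying $H^*C^\Phi=(\mathcal{B}^*,\mathcal{B})$. So the plan is to reduce the corollary to the existence of one admissible $\Phi$. No minimality or extra property of $\Phi$ is needed, since $C^\Phi_0=\mathcal{B}^*$ and $HC^\Phi=\mathcal{B}$ hold for any admissible $\Phi$ by the theorem.

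To build such a $\Phi$ I would use the ``trivial decoration''. On objects, set $\Phi(a)=End_\mathcal{B}(a)$, a monoidal category with unit $id^\mathcal{B}_a$ as recalled in Section~\ref{prelim}. On a non-identity morphism $f:a\to b$ of $\mathcal{B}^*$, let $\Phi_f:End_\mathcal{B}(a)\to End_\mathcal{B}(b)$ be the functor constant at $id^\mathcal{B}_b$, i.e. $\Phi_f(\alpha)=id^\mathcal{B}_b$ and $\Phi_f(\varphi)=1_{id^\mathcal{B}_b}$; on an identity morphism $id_a$, set $\Phi_{id_a}=\mathrm{id}_{End_\mathcal{B}(a)}$. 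First I would check that each $\Phi_f$ is strong monoidal: its structure maps $\Phi_f(\alpha)\circledast\Phi_f(\beta)=id^\mathcal{B}_b\circledast id^\mathcal{B}_b\to id^\mathcal{B}_b$ and $id^\mathcal{B}_b\to\Phi_f(id^\mathcal{B}_a)$ are the unitors of $\mathcal{B}$ at $b$, and the coherence axioms for a monoidal functor reduce to coherence in the monoidal category $End_\mathcal{B}(b)$. Then I would check that $\Phi$ is a genuine functor into $\mathbf{Cat}^\otimes$, that is, that it strictly preserves identities (true by the clause defining $\Phi_{id_a}$) and composition: a composite of constant-at-unit functors is again constant at a unit, so the case demanding attention is a composite $f:a\to b$, $g:b\to a$ of non-identities whose composite $gf$ is an identity, where one must verify directly that $\Phi_g\circ\Phi_f=\Phi_{gf}$.

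Granting functoriality of $\Phi$, Theorem~\ref{thmmain} applies and $C=C^\Phi$ is the desired internalization. I expect the second half of the middle step — confirming that $\Phi$ preserves composition and identities — to be the real obstacle, because a morphism of $\mathcal{B}^*$ a priori carries no data relating $End_\mathcal{B}(a)$ to $End_\mathcal{B}(b)$, so the argument must rest entirely on the rigidity of constant-at-unit functors. If that rigidity is not enough for every shape of $\mathcal{B}^*$, the alternative is to choose $\Phi$ more carefully on morphisms, still keeping $\Phi(a)=End_\mathcal{B}(a)$ on objects (which is all Theorem~\ref{thmmain} actually uses), and this is the point of the proof that would need the most care.
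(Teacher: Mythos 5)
Your construction is essentially the paper's own: take $\Phi(a)=End_\mathcal{B}(a)$ on objects and let $\Phi_f$ be the constant functor at the monoidal unit $id^\mathcal{B}_b$ of the codomain. The one deviation is that you special-case identities, setting $\Phi_{id_a}=id_{End_\mathcal{B}(a)}$ rather than the constant functor at $id^\mathcal{B}_a$; the paper uses the constant assignment uniformly for all morphisms of $\mathcal{B}^*$, identities included.

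That deviation creates the gap that you yourself flag, and the gap is real, not hypothetical. Suppose $\mathcal{B}^*$ contains non-identity morphisms $f:a\to b$ and $g:b\to a$ with $gf=id_a$; any non-trivial isomorphism, or more generally any non-trivial section--retraction pair, does this. Then $\Phi_g\circ\Phi_f$ sends every object of $End_\mathcal{B}(a)$ to $id^\mathcal{B}_a$ --- it is the constant functor --- while $\Phi_{gf}=\Phi_{id_a}=id_{End_\mathcal{B}(a)}$. These two functors agree only when $End_\mathcal{B}(a)$ has a single object and a single morphism, which is false in all the motivating examples (bimodules over algebras, Hilbert bimodules over von Neumann algebras). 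So your $\Phi$ is not a functor for general $\mathcal{B}^*$, and Theorem 3.1 cannot be invoked. The ``rigidity'' of constant-at-unit functors that you were counting on in fact works against you: it forces every composite $\Phi_g\Phi_f$ of non-identities to be constant, and your $\Phi_{id_a}$ is the one assignment in sight that is \emph{not} constant. The paper's uniform choice eliminates the case split entirely: a composite of constant-at-unit functors is again constant at the appropriate unit, so the assignment $f\mapsto\Phi_f$ preserves composition for free. Your instinct that strict functoriality of $\Phi$ is the delicate point is sound, but the remedy is to drop the special case, not introduce it. (You might also observe that with the paper's uniform choice $\Phi_{id_a}$ is then not literally the identity endofunctor of $End_\mathcal{B}(a)$ either, a wrinkle the paper does not comment on; but that is a separate matter and is in any case not fixed by your modification --- your modification trades it for the harder compositionality failure above.)
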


\begin{proof}
Let $(\mathcal{B}^*,\mathcal{B})$ be a decorated bicategory. We wish to prove that there exists a double cateogry $C$ satisfying the equation $H^*C=(\mathcal{B}^*,\mathcal{B})$. 

We prove that $Fun(\mathcal{B}^*,\mbox{\textbf{Cat}}^\otimes)_\mathcal{B}$ is non-empty. We do this by exhibiting a $\Phi$ in $Fun(\mathcal{B}^*,\mbox{\textbf{Cat}}^\otimes)_\mathcal{B}$. We make $\Phi$ be such that $\Phi(a)=End_\mathcal{B}(a)$ for every object $a$ in $\mathcal{B^*}$. Let $a,b$ be objects in $\mathcal{B}^*$. Let $f:a\to B$ be a morphism in $\mathcal{B}^*$. We make $\Phi_f:End_\mathcal{B}(a)\to End_\mathcal{B}(b)$ be the constant functor on the identity endomorphism $id^\mathcal{B}_b$. Thus defined $\Phi_f$ is clearly monoidal. Moreover, the assignment $f\mapsto\Phi_f$ is clearly compatible with the category structure of $\mathcal{B}^*$. The double category $C^\Phi$ associated to $\Phi$ by theorem \ref{thmmain} satisfies the equation $H^*C^\Phi=(\mathcal{B}^*,\mathcal{B})$. This concludes the proof. 
\end{proof}

\noindent Given a decorated bicategory $(\mathcal{B}^*,\mathcal{B})$ we will write $Fun(\mathcal{B}^*,\mbox{\textbf{Cat}}^\otimes)_{\mathcal{B}}$ for the category whose collection of objects is the collection of all functors satisfying the conditions in the statement of theorem \ref{thmmain}, i.e. the collection of objects of $Fun(\mathcal{B}^*,\mbox{\textbf{Cat}}^\otimes)_{\mathcal{B}}$ is the collection of all functors $\Phi:\mathcal{B}^*\to \mbox{\textbf{Cat}}^\otimes$ such that $\Phi(a)=End_{\mathcal{B}}(a)$ for every 0-cell $a$ of $\mathcal{B}$, and whose collection of morphisms is the collection of natural transformations between such functors. Further, we will write \textbf{dCat}$_{(\mathcal{B}^*,\mathcal{B})}$ for the category whose objects are internalizations of $(\mathcal{B}^*,\mathcal{B})$, i.e. the objects of \textbf{dCat}$_{(\mathcal{B}^*,\mathcal{B})}$ are double categories $C$ satisfying the equation $H^*C=(\mathcal{B}^*,\mathcal{B})$, and whose morphisms are double functors $F$ such that $H^*F=id_{(\mathcal{B}^*,\mathcal{B})}$. The Grothendieck construction admits an obvious functorial extension (a 2-functorial extension in fact) to a category of Grothendieck fibrations. It is not diffucult to see that this functor extends the construction of the double categories $C^\Phi$ presented in theorem \ref{thmmain} to a functor $C^\bullet:Fun(\mathcal{B}^*,\mbox{\textbf{Cat}}^\otimes)_{\mathcal{B}}\to \mbox{\textbf{dCat}}_{(\mathcal{B}^*,\mathcal{B})}$. We make use of this only in section 6 in a specific case. We are interested in the construction of $C^\Phi$ when both $\mathcal{B}^*$ and $\mathcal{B}$ are linear. With this in mind we make the following observation:

\begin{obs}
Let $k$ be a field. We will say that a bicategory $\mathcal{B}$ is linear over $k$ if for every $\alpha,\beta\in\mathcal{B}_1$ with the same source and target, the collection of 2-cells Hom$_{\mathcal{B}}(\alpha,\beta)$ from $\alpha$ to $\beta$ in $\mathcal{B}$ is endowed with the structure of a vector space over $k$ in such a way that the vertical and the horizontal composition operations in $\mathcal{B}$ are $k$-bilinear and such that the coherence data of $\mathcal{B}$ is linear. We will say that a decorated bicategory $(\mathcal{B}^*,\mathcal{B})$ is linear if both $\mathcal{B}^*$ and $\mathcal{B}$ are linear. Observe that given a linear bicategory $\mathcal{B}$ the horizontal composition operation in $\mathcal{B}$ provides the category of endomorphisms $End_\mathcal{B}(a)$ of any 0-cell $a$ in $\mathcal{B}$ with the structure of a linear tensor category. We will say that a double category $C$ is linear if $C_0,C_1$ are both linear and if the structure data $s,t,i,\boxminus$ of $C$ are linear. We write \textbf{Cat}$^\otimes_k$ for the category whose objects are linear tensor categories and whose morphisms are linear strict tensor functors. Given a linear decorated bicategory $(\mathcal{B}^*,\mathcal{B})$ we write \textbf{dCat}$^k_{(\mathcal{B}^*,\mathcal{B})}$ for the category whose objects are linear double categories $C$ satisfying $H^*C=(\mathcal{B}^*,\mathcal{B})$ and whose morphisms are double functors $F$ with linear objects and morphism functors $F_0,F_1$ such that $H^*F=id_{(\mathcal{B}^*,\mathcal{B})}$. It is easily seen that the construction presented in theorem \ref{thmmain} extends to a functor $C^\bullet:Fun(\mathcal{B}^*,\mbox{\textbf{Cat}}^\otimes_k)\to \mbox{\textbf{dCat}}^k_{(\mathcal{B}^*,\mathcal{B})}$. 
\end{obs}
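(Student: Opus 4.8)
\noindent The plan is to promote the object assignment $\Phi\mapsto C^\Phi$ of Theorem \ref{thmmain} to a functor by transporting natural transformations through the $2$-functoriality of the Grothendieck construction, and then to check that the double functors so produced are linear and restrict to the identity on decorated horizontalizations.

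On objects there is nothing to do beyond Theorem \ref{thmmain}, which already delivers $C^\Phi$ together with the equation $H^*C^\Phi=(\mathcal{B}^*,\mathcal{B})$; one only records that $C^\Phi$ is linear. This is forced by the construction: $C^\Phi_0=\mathcal{B}^*$ is linear by hypothesis; every hom-space of $C^\Phi_1=\int^*_{\mathcal{B}^*}\Phi$ is, up to the bookkeeping of the base morphism, a space of $2$-cells of $\mathcal{B}$ and hence a $k$-vector space; composition in $\int^*_{\mathcal{B}^*}\Phi$ is $k$-bilinear, being assembled from the bilinear vertical composition $\circ$ of $\mathcal{B}$ and the $k$-linear tensor functors $\Phi_f$; the functors $s,t,i$ are linear, since on the $\int_{\mathcal{B}^*}\Phi$-part they only read off the base morphism together with a $k$-linear assignment in the $2$-cell direction, and are trivial on $\tilde{\mathcal{B}}_2$; and $\boxminus$ is linear because it is the horizontal composition $\circledast$ of $\mathcal{B}$, which is bilinear and $\Phi$-compatible. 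The coherence data of $C^\Phi$ is that of $\mathcal{B}$, hence linear, so $C^\Phi$ lies in $\mbox{\textbf{dCat}}^k_{(\mathcal{B}^*,\mathcal{B})}$.

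On morphisms, given $\theta\colon\Phi\Rightarrow\Psi$ the $2$-functoriality of the Grothendieck construction yields a functor $\int\theta\colon\int_{\mathcal{B}^*}\Phi\to\int_{\mathcal{B}^*}\Psi$ over $\mathcal{B}^*$; since $\theta$ acts as the identity on the objects of each $\Phi(a)=\Psi(a)=End_\mathcal{B}(a)$ this functor is the identity on objects, and since $\theta$ is trivial over identity morphisms of $\mathcal{B}^*$ it fixes every globular morphism $(id_a,\varphi)$. Extending $\int\theta$ by $id_{\tilde{\mathcal{B}}_2}$ produces $C^\theta_1\colon\int^*_{\mathcal{B}^*}\Phi\to\int^*_{\mathcal{B}^*}\Psi$, and I set $C^\theta:=(id_{\mathcal{B}^*},C^\theta_1)$. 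The remaining verifications are routine: $C^\theta$ is compatible with $s,t$ since both constituent functors lie over $\mathcal{B}^*$; with $i$ since $\theta$ respects monoidal units, so $i_f=(f,id^\mathcal{B}_{i_b})$ is preserved; with the coherence data since $C^\theta$ touches it only through that of $\mathcal{B}$; it is linear since the components of $\theta$ are $k$-linear and $id_{\tilde{\mathcal{B}}_2}$ is linear; and $H^*C^\theta=id_{(\mathcal{B}^*,\mathcal{B})}$ since $C^\theta$ is the identity on $\mathcal{B}_0$, on $\mathcal{B}_1$, and on the globular squares $\mathcal{B}_2$, the last decomposing into the morphisms of $\tilde{\mathcal{B}}_2$, fixed by $id_{\tilde{\mathcal{B}}_2}$, and the morphisms $(id_a,\varphi)$, fixed by $\int\theta$ as noted. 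Functoriality of $C^\bullet$ then follows from $\int(id)=id$, $\int(\theta'\theta)=\int\theta'\circ\int\theta$, and functoriality of the extension by $id_{\tilde{\mathcal{B}}_2}$; the linear structures are carried along verbatim.

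The one step carrying genuine content is the compatibility of $C^\theta_1$ with the horizontal composition $\boxminus$. Since $\boxminus$ on $\int_{\mathcal{B}^*}\Phi$ sends a composable pair $(f,\varphi),(f,\psi)$ to $(f,\varphi\circledast\psi)$, one must verify that transporting $\varphi\circledast\psi$ along the data of $\theta$ over $f$ coincides with transporting $\varphi$ and $\psi$ separately and composing the results horizontally; unwinding this, it is exactly the statement that the comparison data of $\theta$ over the morphisms of $\mathcal{B}^*$ is \emph{monoidal}, which is part of the hypothesis that $\theta$ is a morphism of $Fun(\mathcal{B}^*,\mbox{\textbf{Cat}}^\otimes_k)$; combined with the interchange law for $\circledast$ in $\mathcal{B}$ the desired equality drops out. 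I expect this to be the only place where more than a mechanical unwinding of the definitions of Section \ref{prelim} and of the proof of Theorem \ref{thmmain} is needed.
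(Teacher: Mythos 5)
The gap is in your assertion that ``$\theta$ acts as the identity on the objects of each $\Phi(a)=\Psi(a)=End_\mathcal{B}(a)$'' and ``fixes every globular morphism $(id_a,\varphi)$''. A morphism $\theta\colon\Phi\Rightarrow\Psi$ of $Fun(\mathcal{B}^*,\mbox{\textbf{Cat}}^\otimes_k)$ is a natural transformation whose component at $a$ is an arbitrary linear strict tensor endofunctor $\theta_a$ of $End_\mathcal{B}(a)$: nothing in the definitions forces $\theta_a$ to be identity-on-objects, let alone the identity functor, and the naturality square at $id_a$ is vacuous since $\Phi_{id_a}=\Psi_{id_a}=id$. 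Under the Grothendieck construction $\int\theta$ sends $(a,\alpha)$ to $(a,\theta_a(\alpha))$ and $(id_a,\varphi)$ to $(id_a,\theta_a(\varphi))$, so whenever some $\theta_a\neq id$ the induced double functor $C^\theta$ is not the identity on horizontal $1$-cells or on globular squares; therefore $H^*C^\theta\neq id_{(\mathcal{B}^*,\mathcal{B})}$ and $C^\theta$ fails to be a morphism of $\mbox{\textbf{dCat}}^k_{(\mathcal{B}^*,\mathcal{B})}$ as that category is defined. The rest of your construction (extension by $id_{\tilde{\mathcal{B}}_2}$, reduction of $\boxminus$-compatibility to monoidality of the components $\theta_a$, functoriality from $\int(id)=id$ and $\int(\theta'\theta)=\int\theta'\circ\int\theta$) is the right shape.

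It is worth noting that the paper itself makes an implicit adjustment in Section 8: there the target is $\mbox{\textbf{gCat}}^{\ell=1}_{(\Omega G,2\Omega A)}$, whose morphisms are required to satisfy only $F_0=id$ rather than $H^*F=id$, precisely because the component $\theta_\ast=F_1\restriction_A$ arising there may be a nontrivial endomorphism of $A$. To repair your argument, either restrict the source to natural transformations all of whose components are identity functors (which essentially trivializes the category) or replace the stated codomain by the category whose objects are linear double categories $C$ with $H^*C=(\mathcal{B}^*,\mathcal{B})$ and whose morphisms are linear double functors $F$ satisfying merely $F_0=id_{\mathcal{B}^*}$; under the second option your construction of $C^\theta$ goes through essentially verbatim, with the verification of $H^*C^\theta=id$ replaced by $C^\theta_0=id_{\mathcal{B}^*}$.
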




\section{Examples}

\noindent In this section we provide examples of double categories obtained through the methods provided by theorem \ref{thmmain}. We provide non-equivalent solutions to problem \ref{prob} in certain cases, we provide interpretations whithin the theory of double categories of classic constructions in the theory of tensor categories and we provide a linear double category of von Neumann algebras.

\

\noindent \textit{Single object decorated bicategories}

\

\noindent Let $M$ be a monoid. We write $\Omega M$ for the delooping category of $M$. By the Eckman-Hilton argument \cite{EckmanHilton} $\Omega M$ admits a strict monoidal structure only in the case in which $M$ is commutative, in which case the tensor product operation on $\Omega M$ is provided by the product operation of $M$. Given a monoidal category $D$ the pair $(\Omega M,2D)$ is a decorated bicategory and every strict single object decorated bicategory is of this form. We consider double categories of the form $C^\Phi$ for functors $\Phi$ in $Fun(\Omega M,\mbox{\textbf{Cat}}^\otimes)_{2D}$ for a monoidal category $D$. We first consider the case in which $D$ of the form $\Omega N$ for a commutative monoid $N$. 

Let $\Phi$ be a functor in $Fun(\Omega M,\mbox{\textbf{Cat}}^\otimes)_{2\Omega N}$ for monoids $M,N$ where $N$ is commutative. Any such $\Phi$ associates $End_{2\Omega N}(\ast)$ to the only object $\ast$ in $\Omega M$ and associates a monoidal endofunctor $\Phi_m$ of $End_{2\Omega N}(\ast)$ to every $m\in M$. That is, every functor $\Phi$ in $Fun(\Omega M,\mbox{\textbf{Cat}}^\otimes)_{2\Omega N}$ associates $\Omega N$ to the only object $\ast$ of $\Omega M$ and defines a morphism, which we keep denoting by $\Phi$, from $M$ to the monoid of monoid endomorphisms $End(N)$ of $N$. The category of objects $C^\Phi_0$ of $C^\Phi$ is equal to $\Omega M$. To compute $C^\Phi$ we thus only need to compute the category of morphisms $C^\Phi_1$ of $C^\Phi$. The category $\tilde{2\Omega N}_2$ is empty and thus $C^\Phi_1$ is equal to $\int_{\Omega M}\Phi$ which in this case is equal to the delooping category $\Omega(N\rtimes_\Phi M)$. The squares of $C^\Phi$ are thus all of the form:

\begin{center}
\begin{tikzpicture}
  \matrix (m) [matrix of math nodes,row sep=3em,column sep=3em,minimum width=2em]
  {
     \ast&\ast \\
     \ast&\ast \\};
  \path[-stealth]
    (m-1-1) edge [red] node [above]{}(m-1-2)
            edge [white] node [black][fill=white]{$(m,n)$}(m-2-2)    
    (m-2-1) edge [red] node {} (m-2-2)
    (m-1-1) edge node [left]{$m$} (m-2-1)
    (m-1-2) edge node [right]{$m$} (m-2-2);
\end{tikzpicture}

\end{center}

\noindent with $m\in M,n\in N$. Vertical and horizontal composition of such squares is defined by composition in $N\rtimes_\Phi M$ and by the product operation in $M$ respectively. Analogous computations apply in the case in which either $M$ or $N$ is a group and in the case $M$ and $N$ are algebras over a field $k$ or von Neumann algebras. We have the following corollary of theorem \ref{thmmain}.

\begin{cor}\label{1stcorollary}
Let $M,N$ be monoids such that $N$ is commutative. Problem \ref{prob} for the decorated bicategory $(\Omega M,2\Omega N)$ admits at least as many solutions, up to double equivalence, as there are isomorphism types of extensions $N\rtimes_\Phi N$.
\end{cor}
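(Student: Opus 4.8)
The plan is to exhibit an injection (up to double equivalence) from isomorphism classes of extensions $N\rtimes_\Phi M$ into the set of solutions to problem \ref{prob} for $(\Omega M, 2\Omega N)$. By the discussion immediately preceding the statement, every functor $\Phi \in Fun(\Omega M, \mbox{\textbf{Cat}}^\otimes)_{2\Omega N}$ is the same datum as a monoid homomorphism $\Phi : M \to End(N)$, and the associated double category $C^\Phi$ from theorem \ref{thmmain} has $C^\Phi_0 = \Omega M$ and $C^\Phi_1 = \Omega(N \rtimes_\Phi M)$, with the structure operations being composition in $N \rtimes_\Phi M$ (vertical) and multiplication in $M$ (horizontal). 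So first I would record that each action $\Phi$ yields a solution $C^\Phi$, and that theorem \ref{thmmain} guarantees $H^* C^\Phi = (\Omega M, 2\Omega N)$, so these are genuine solutions to problem \ref{prob}.

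The core step is to show that if $\Phi, \Phi'$ give rise to double-equivalent double categories $C^\Phi \simeq C^{\Phi'}$, then the semidirect product extensions $N \rtimes_\Phi M$ and $N \rtimes_{\Phi'} M$ are isomorphic (as extensions of $M$ by $N$, or at least abstractly isomorphic in a way compatible with the projection to $M$ — the statement only claims ``isomorphism types of extensions,'' so I would use whichever notion makes the count go through). Here I would argue that a double equivalence $F : C^\Phi \to C^{\Phi'}$ restricts to an equivalence of the categories of morphisms $F_1 : \Omega(N \rtimes_\Phi M) \to \Omega(N \rtimes_{\Phi'} M)$; since both are delooping categories of monoids and an equivalence of one-object categories is an isomorphism of the corresponding monoids (functoriality forces the single hom-monoid to be carried isomorphically), we get $N \rtimes_\Phi M \cong N \rtimes_{\Phi'} M$. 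Compatibility of $F$ with the source/target functors down to $\Omega M$ (which $F$ must respect since $H^* F$ fixes the decoration, or more weakly since $F$ is a double equivalence inducing an equivalence $\Omega M \simeq \Omega M$) ensures this isomorphism is compatible with the projections to $M$, hence is an isomorphism of extensions. Therefore the map $[\Phi] \mapsto [C^\Phi]$ is injective on isomorphism types, giving the claimed inequality ``at least as many solutions as isomorphism types of extensions.''

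I expect the main obstacle to be the bookkeeping around what precisely a ``double equivalence'' preserves and hence what flavour of ``isomorphism of extensions'' one can extract: a priori an equivalence of double categories need not fix $C_0$ on the nose, so one must either invoke that $H^* C^\Phi = (\Omega M, 2\Omega N)$ pins down the decoration and push the rigidity through, or settle for an abstract group/monoid isomorphism $N \rtimes_\Phi M \cong N \rtimes_{\Phi'} M$ and check this still refines to the level of ``extensions'' as the corollary intends. A secondary, more routine point is verifying that $\int_{\Omega M}\Phi$ really is $\Omega(N \rtimes_\Phi M)$ — this is a direct unwinding of definition \ref{Grothendieckdefi}: objects of $\int_{\Omega M}\Phi$ are pairs $(\ast, n)$ with $n$ an object of $End_{2\Omega N}(\ast) = \Omega N$, of which there is exactly one, and morphisms are pairs $(m, n)$ with $m \in M$ and $n \in N$, composing by $(m', n')(m, n) = (m'm,\, n' \cdot \Phi_{m'}(n))$, which is exactly the multiplication in $N \rtimes_\Phi M$. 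I would state this identification, defer its elementary verification, and then assemble the counting argument as above.
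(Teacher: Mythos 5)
Your proof is correct and takes essentially the same approach as the paper: the paper's one-line argument is precisely the observation that a double equivalence $C^\Phi\to C^\Psi$ induces a monoid isomorphism $N\rtimes_\Phi M\cong N\rtimes_\Psi M$, which you establish by noting that $F_1$ is an equivalence of one-object categories and hence an isomorphism of the corresponding endomorphism monoids. Your extra care about whether the resulting isomorphism is an isomorphism of extensions in the strict sense (compatible with the projection to $M$) versus merely an abstract monoid isomorphism is a genuine refinement of the paper's phrasing, which is silent on this point.
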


\begin{proof}
The result follows by theorem \ref{thmmain}, by the observations above and by the easy observation that any double equivalence $F:C^\Phi\to C^{\Psi}$ for $\Phi,\Psi$ in $Fun(\Omega M,\mbox{\textbf{Cat}}^\otimes)_{2\Omega N}$ induces an isomorphism from $N\rtimes_\Phi M$ to $N\rtimes _\Psi M$. This concludes the proof of the lemma.
\end{proof}

\noindent Corollary \ref{1stcorollary} can be refined as follows: Suppose that $M,N$ are groups. In that case observe that both the category of objects and the category of morphisms of the double cateogry $C^\Phi$ presented in the proof of corollary \ref{1stcorollary} are groupoids. In that case the decorated bicategory $(\Omega M,2\Omega N)$ not only admits solutions but admits solutions internal to the category of groupoids and groupoid morphisms. A similar comment applies in the case in which $M,N$ are algebras over a field $k$. In that case the double category $(\Omega M,2\Omega N)$ not only admits internalizations but admits $k$-linear internalizations. We study a stronger version of corollary \ref{1stcorollary} in section 6. A non-strict case of the linear version of corollary \ref{1stcorollary} would imply a study of extensions of $E_2$ algebras.

\

\noindent \textit{Graded vector spaces}

\

\noindent Let $G,H$ be groups. Suppose $H$ is abelian. We write $\mathcal{C}_G(H)$ for the monoidal category whose objects are of the form $\delta_g$ with $g\in G$ and such that Hom$_{\mathcal{C}_G(H)}(\delta_g,\delta_{g'})$ is equal to $H$ when $g=g'$ and empty otherwise. Tensor product in $\mathcal{C}_G(H)$ is defined by $\delta_g\otimes \delta_{g'}=\delta_{gg'}$ on objects and by $a\otimes a'=aa'$ for endomorphisms $a,a'$ of $\delta_g$ and $\delta_{g'}$. The pair $(\Omega G,2\mathcal{C}_G(H))$ is a decorated 2-category.

Let $\Phi$ be a functor in $Fun(\Omega G,\mbox{Cat}^\otimes)_{2\mathcal{C}_G(H)}$. Such a functor is such that $\Phi(\ast)=\mathcal{C}_G(H)$ and such that $\Phi_g$ is a monoidal endofunctor of $\mathcal{C}_G(H)$ for every $g\in G$. Assume that $\Phi_g$ is the identity on the collection of objects of $\mathcal{C}_G(H)$ and that the restriction of $\Phi_g$ to endomorphisms of $\delta_g,\delta_{g'}$ is equal for every $g,g'\in G$. Such a functor $\Phi$ defines a morphism, which we keep denoting by $\Phi$, from $G$ to $Aut(H)$. In this case the category of objects $C^\Phi_0$ of $C^\Phi$ is equal to $\Omega G$, the category $\tilde{2\mathcal{C}_G(H)}$ is empty and thus the category of morphisms $C^\Phi_1$ of $C^\Phi$ is equal to $\int_{\Omega G}\Phi$. The collection of objects of $\int_{\Omega G}\Phi$ is equal to $G$. Given $g,g'\in G$ the collection of morphisms Hom$_{\int_{\Omega G}\Phi}(g,g')$ is the collection of all pairs $(g,h)$ with $g\in G$ and $h\in H$ whenever $g=g'$ and is empty otherwise. That is, the squares of $C^\Phi$ are all of the form:

\begin{center}
\begin{tikzpicture}
  \matrix (m) [matrix of math nodes,row sep=3em,column sep=3em,minimum width=2em]
  {
     \ast&\ast \\
     \ast&\ast \\};
  \path[-stealth]
    (m-1-1) edge node [above]{$\delta_{g'}$}(m-1-2)
            edge [white] node [black][fill=white]{$(g,h)$}(m-2-2)    
    (m-2-1) edge node [below]{$\delta_{g'}$} (m-2-2)
    (m-1-1) edge node [left]{$g$} (m-2-1)
    (m-1-2) edge node [right]{$g$} (m-2-2);
\end{tikzpicture}

\end{center}

\noindent where $g,g'\in G$ and $h\in H$. The vertical composition of any two such pairs $(g',h')\boxvert(g,h)$ is equal to $(g'g,h'\Phi_{g'}(h))$ and the horizontal composition is given by $(g,h)\boxminus (g,h')=(g,hh')$. Identifying every $g\in G$ with the object $\delta_g$ in $\mathcal{C}_G(H)$ and identifying every pair $(g,h)$ with $g\in G$ and $h\in H$ with $h$ as an endomorphism of $\delta_g$ it is easily seen that the vertical monoidal category of $C^\Phi$, i.e. the category whose objects are vertical morphisms of $C^\Phi$, whose morphisms are 2-morphisms in $C^\Phi$, whose composition operation is the horizontal composition of $C^\Phi$ and whose tensor product is the vertical composition of $C^\Phi$, is isomorphic to the category $\mathcal{C}_G(H,\Phi)$ obtained from $\mathcal{C}_G(H)$ by twisting by $\Phi$, see \cite{EGNO}. The obvious linear extension of this example also holds. That is, if $k$ is a field, $G,H$ are groups, $H$ is abelian and $\Phi$ is a functor in $Fun(G,\mbox{Cat}^\otimes_k)_{2\mbox{\textbf{Vec}}_k^G(H)}$ then $C^\Phi$ is $k$-linear and its vertical category is isomorphic, as $k$-linear tensor categories to \textbf{Vec}$_k^G(H,\Phi)$.

\

\noindent \textit{Algebras and von Neumann algebras}

\

\noindent Let $k$ be a field. We write $\underline{\mbox{\textbf{Alg}}}_k$ for the bicategory with unital $k$-algebras, representations, and intertwining operators as 0-, 1-, and 2-dimensional cells. The horizontal composition operation $\circledast$ on $\underline{\mbox{\textbf{Alg}}}_k$ is the relative tensor product of representations. The tensor category of endomorphisms $End_{\underline{\mbox{\textbf{Alg}}}_k}(A)$ of an algebra $A$ is the category of $A$-bimodules $Mod_A$. We write \textbf{Alg}$_k$ for the category whose objects are $k$-algebras and whose morphisms are unital algebra morphisms. The pair $(\mbox{\textbf{Alg}}_k,\underline{\mbox{\textbf{Alg}}}_k)$ is a linear decorated bicategory. Let $\Phi$ be the functor in $Fun(\mbox{\textbf{Alg}}_k,\mbox{\textbf{Cat}}^\otimes)_{\underline{\mbox{\textbf{Alg}}}_k}$ such that for every $f:A\to B$ the functor $\Phi_f$ is the constant tensor functor on $_BB_B$. The linear double category $C^\Phi$ is such that $H^*C^\Phi=(\mbox{\textbf{Alg}}_k,\underline{\mbox{\textbf{Alg}}}_k)$. Non-globular squares 

\begin{center}
\begin{tikzpicture}
  \matrix (m) [matrix of math nodes,row sep=3em,column sep=3em,minimum width=2em]
  {
     A&A \\
     B&B \\};
  \path[-stealth]
    (m-1-1) edge node [above]{$M$}(m-1-2)
            edge [white] node [black][fill=white]{$\varphi$}(m-2-2)    
    (m-2-1) edge node [below]{$N$} (m-2-2)
    (m-1-1) edge node [left]{$f$} (m-2-1)
    (m-1-2) edge node [right]{$f$} (m-2-2);
\end{tikzpicture}

\end{center}

\noindent in $C^\Phi$ are formed by pairs $(f,\varphi)$ where $f:A\to B$ and $\varphi$ is a morphism of $B$-modules from the trivial bimodule $B$ to $N$.

Analogous constructions apply for any flavor of algebra, representation and intertwiner operator. Of special relevance is the case of von Neumann algebras, Hilbert bimodules, and bounded intertwiner operators. We write $\underline{[W^*]}$ for the bicategory whose 0-,1- and 2-cells are von Neumann algebras, Hilbert bimodules, and intertwiner operators respectively. The horizontal composition operation $\boxminus$ is provided by Connes fusion $\boxtimes$ and the horizontal identity $i_A$ of a von Neumann algebra $A$ is provided by the Haagerup standard form $L^2A$ of $A$, see \cite{Thom,Landsman}. Given a von Neumann algebra $A$ the category of endomorphisms $End_{\underline{[W^*]}}(A)$ is the category of Hilbert bimodules $Mod_A$. We write \textbf{vN} for the category whose objects are von Neumann algebras and their morphisms. The pair $(\mbox{\textbf{vN}},\underline{[W^*]})$ is a linear decorated bicategory. Wite $\Phi$ for the functor in $Fun(\mbox{\textbf{vN}},\mbox{\textbf{Cat}}^\otimes)_{\underline{[W^*]}}$ such that $\Phi_f$ is the constant functor on $L^2B$ for every von Neumann algebra morphism $f:A\to B$. The double category $C^\Phi$ is such that $H^*C^\Phi=(\mbox{\textbf{vN}},\underline{[W^*]})$. A non-globular square

\begin{center}
\begin{tikzpicture}
  \matrix (m) [matrix of math nodes,row sep=3em,column sep=3em,minimum width=2em]
  {
     A&A \\
     B&B \\};
  \path[-stealth]
    (m-1-1) edge node [above]{$M$}(m-1-2)
            edge [white] node [black][fill=white]{$\varphi$}(m-2-2)    
    (m-2-1) edge node [below]{$N$} (m-2-2)
    (m-1-1) edge node [left]{$f$} (m-2-1)
    (m-1-2) edge node [right]{$f$} (m-2-2);
\end{tikzpicture}

\end{center}

\noindent in $C^\Phi$ is now formed by a pair $(f,\varphi)$ where $f:A\to B$ is a morphism of von Neumann algebras and where $\varphi:L^2B\to N$ is an intertwiner operator of $B$-bimodules. The horizontal identity functor $i$ in $C^\Phi$ associates to every morphism of von Neumann algebras $f:A\to B$ the pair $(f,id_{L^2B})$ and the horizontal composition functor $\boxminus$ in $C^\Phi$ is defined as $(f,\varphi)\boxminus(f,\psi)=(f,\varphi\boxtimes \psi)$ for every horizontally compatible pair $(f,\varphi),(f,\psi)$ in $C^\Phi$.

\

\noindent \textit{Induction}

\

\noindent The examples provided above are rather unsatisfactory solutions to problem \ref{prob} for the decorated bicategories of algebras and von Neumann algebras. We present an alternative solution for proper subategories of the corresponding decorations.

Let $A,B$ be $k$-algebras. Let $f:A\to B$ be a morphism. We say that $f$ is tensor inductive if the induced representation functor $f_*:Mod_A\to Mod_B$ associated to $f$ is a tensor functor. Examples of tensor inductive morphisms are isomorphism and morphisms between commutative algebras. The composition of tensor unductive morphisms is again tensor inductive and the identity $id_A$ of an algebra $A$ is always tensor inductive. We write \textbf{Alg}$^{TI}_k$ for the category whose objects are unital $k$-algebras and whose morphisms are unital tensor inductive morphisms. Thus defined \textbf{Alg}$^{TI}_k$ has the category of commutative algebras and the underlying groupoid of \textbf{Alg}$_k$ as subcategories. The pair $(\mbox{\textbf{Alg}}^{TI}_k,\underline{\mbox{\textbf{Alg}}}_k)$ is a linear decorated bicaegory.

Write $\Phi:\mbox{\textbf{Alg}}_k^{TI}\to\mbox{\textbf{Cat}}^\otimes$ for the functor such that $\Phi(A)=Mod_A$ for every algebra $A$ and such that $\Phi(f)=f_*$ for every morphism $f:A\to B$ in \textbf{Alg}$^{TI}_k$. Thus defined $\Phi$ is an object in $Fun(\mbox{\textbf{Alg}}^{TI}_k,\mbox{\textbf{Cat}}^\otimes)_{\underline{\mbox{\textbf{Alg}}}_k}$. The double category $C^\Phi$ is a linear solution to the equation $H^*C^\Phi=(\mbox{\textbf{Alg}}^{TI}_k,\underline{\mbox{\textbf{Alg}}}_k)$. A non-globular square in $C^\Phi$ of the form

\begin{center}
\begin{tikzpicture}
  \matrix (m) [matrix of math nodes,row sep=3em,column sep=3em,minimum width=2em]
  {
     A&A \\
     B&B \\};
  \path[-stealth]
    (m-1-1) edge node [above]{$M$}(m-1-2)
            edge [white] node [black][fill=white]{$\varphi$}(m-2-2)    
    (m-2-1) edge node [below]{$N$} (m-2-2)
    (m-1-1) edge node [left]{$f$} (m-2-1)
    (m-1-2) edge node [right]{$f$} (m-2-2);
\end{tikzpicture}

\end{center}

\noindent is now formed by a pair $(f,\varphi)$ where $f$ is a tensor inductive morphism from $A$ to $B$, and where $\varphi$ is a morphism from $f_*M$ to $N$ in $Mod_B$. The horizontal identity $i_f$ of a tensor inductive morphism $f:A\to B$ is the pair $(f,id_B)$. A contravariant version of the above construction applies for morphisms $f$ such that the restriction functor $f^*$ is a tensor functor.

An analogous construction applies in the case of von Neumann algebras. We say that a morphism of von Neumann algebras $f:A\to B$ is $\boxtimes$-inductive if $f_*:Mod_A\to Mod_B$ is a tensor functor. We write  \textbf{vN}$^{\boxtimes}$ for the category of von Neumann algebras and $\boxtimes$-inductive morphisms. Again the pair $(\mbox{\textbf{vN}}^\boxtimes,\underline{[W^*]})$ is a decorated bicategory. Write $\Phi$ for the functor from \textbf{vN}$^\boxtimes$ to \textbf{Cat}$^\otimes$ such that $\Phi(A)=Mod_A$ for every $A$ and such that $\Phi_f=f_*$ for every morphism $f:A\to B$ in \textbf{vN}$^\boxtimes$. Thus defined $\Phi$ is a functor in $Fun(\mbox{\textbf{vN}}^\boxtimes,\mbox{\textbf{Cat}}^\otimes)_{[W^*]}$. The double category $C^\Phi$ is a linear double category satisfying the equation $H^*C^\Phi=(\mbox{\textbf{vN}}^\boxtimes,{\underline{[W^*]}})$. A non-globular square

\begin{center}
\begin{tikzpicture}
  \matrix (m) [matrix of math nodes,row sep=3em,column sep=3em,minimum width=2em]
  {
     A&A \\
     B&B \\};
  \path[-stealth]
    (m-1-1) edge node [above]{$M$}(m-1-2)
            edge [white] node [black][fill=white]{$\varphi$}(m-2-2)    
    (m-2-1) edge node [below]{$N$} (m-2-2)
    (m-1-1) edge node [left]{$f$} (m-2-1)
    (m-1-2) edge node [right]{$f$} (m-2-2);
\end{tikzpicture}

\end{center}

\noindent in $C^\Phi$ is now defined by a pair $(f,\varphi)$ where $f:A\to B$ is a $\boxtimes$-inductive morphism and where $\varphi$ is an intertwining operator from $f_*M$ to $N$ in $Mod_B$. The horizontal identity $i_f$ of a von Neumann algebra morphism $f:A\to B$ is the pair $(f,id_{L^2B})$. The horizontal composition functor $\boxminus$ of $C^\Phi$ again acts on pairs of morphisms $(f,\varphi),(f,\psi)$ as $(f,\varphi)\boxminus(f,\psi)=(f,\varphi\boxtimes\psi)$. Compare this to \cite{Bartels1}. A similar construction applies when substituting induction by restriction and when substituting von Neumann algebras with $C^*$-algebras.

\section{Foldings and cofoldings}

\noindent In this section we analyze possible folding and cofolding structures on certain double categories constructed through the methods of theorem \ref{thmmain}. We refer the reader to \cite{FioreFreeMonads} for a treatment of holonomies, foldings, and cofoldings on double categories. 

\

\noindent We consider the following situation: Let $M,A$ be monoids. Suppose $A$ is commutative. In that case the pair $(\Omega M,2\Omega A)$ is a decorated 2-category. Let $\Phi$ be a functor in $Fun(\Omega M,\mbox{\textbf{Cat}}^\otimes)_{2\Omega}$. A holonomy on $C^\Phi$ is a 2-functor from $C^\Phi_0$ to $H2\Omega A$, i.e. a holonomy on $C^\Phi$ is a morphism from $M$ to the category of horizontal morphisms of $C^\Phi$ which are the identity on objects. This category is trivial since $\Omega A$ has a single object. Likewise $C^\Phi$ admits a single trivial coholonomy. Foldings are extensions of holonomies establishing compatible bijections between certain squares. Cofoldings are defined similarly. We prove the following proposition.

\begin{prop}\label{fold1}
Let $M,A$ be monoids. Suppose $A$ is commutative. Let $\Phi$ be a functor in $Fun(\Omega M,\mbox{\textbf{Cat}}^\otimes)_{2\Omega A}$ such that for $\Phi_m=id_{\Omega A}$ for every $m\in M$. In that case $C^\Phi$ admits a folding and a cofolding.
\end{prop}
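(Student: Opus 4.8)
The plan is to unpack what a folding and a cofolding concretely require in the very special double category $C^\Phi$ under consideration, and then exhibit the relevant data explicitly, since almost everything trivializes. First I would recall the description of $C^\Phi$ computed in the ``Single object decorated bicategories'' subsection: its category of objects is $\Omega M$, the category $\tilde{2\Omega A}_2$ is empty, so $C^\Phi_1 = \int_{\Omega M}\Phi = \Omega(A \rtimes_\Phi M)$, and since $\Phi_m = id_{\Omega A}$ for all $m$ the semidirect product is in fact the direct product, so $C^\Phi_1 = \Omega(A\times M)$. Thus every square of $C^\Phi$ has the form of a pair $(m,a)$ with $m\in M$, $a\in A$, with left and right vertical edge both $m$, and both horizontal edges necessarily the unique object $\ast$ (equivalently the identity $1$-cell of $2\Omega A$, drawn red). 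Vertical composition is $(m',a')\boxvert(m,a) = (m'm, a'a)$ and horizontal composition is $(m,a)\boxminus(m,a') = (m,aa')$.

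Next I would record that, as noted in the paragraph preceding the proposition, the category of horizontal morphisms of $C^\Phi$ that are identities on objects is trivial (a single object, a single morphism), because $\Omega A$ — hence $H2\Omega A$ — has a single object and a single horizontal $1$-cell; so $C^\Phi$ carries a unique holonomy $\Theta\colon C^\Phi_0 \to H2\Omega A$ and, dually, a unique coholonomy. A folding on $C^\Phi$ then amounts to an assignment sending each square $(m,a)$ (with vertical source and target the vertical morphism $m$, and trivial horizontal boundaries) to a globular square of $C^\Phi$ with appropriately $\Theta$-twisted horizontal boundaries, subject to the folding axioms: compatibility with the holonomy, bijectivity of the induced maps on hom-sets, and compatibility with both compositions $\boxvert$ and $\boxminus$ together with the identities. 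Here is the key simplification I would exploit: because the horizontal $1$-cell type is unique, both horizontal edges of any square are forced, so the ``twisting'' by $\Theta$ is the identity operation and the candidate folding is simply the identity assignment $(m,a)\mapsto (m,a)$, now regarded as a globular square via the canonical identification of $\tilde{\mathcal{B}}_1$-free squares with $2$-cells of $\mathcal{B}=2\Omega A$ (equivalently, every square of $C^\Phi$ is already globular up to relabelling the vertical edges, since its horizontal boundary is trivial). I would then check the folding axioms one at a time: the map on each hom-set is the identity, hence bijective; compatibility with $\boxvert$ is the statement $(m'm,a'a)\mapsto$ itself, immediate; compatibility with $\boxminus$ is $(m,aa')\mapsto$ itself, immediate; and compatibility with horizontal/vertical identities holds because $i_m$ corresponds to $(m,id^{2\Omega A}_{i_\ast})$ which under the identification is the unit of $A$, and the horizontal identity of the unique $1$-cell is handled the same way. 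The cofolding is obtained by the evident dual construction and dual verification.

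The main obstacle — really a bookkeeping obstacle rather than a conceptual one — is getting the coherence-data interaction exactly right: one must verify that the identity assignment genuinely intertwines the structure constraints (left/right unitors and associator) of the bicategory $H C^\Phi = 2\Omega A$ with those of $C^\Phi$, i.e. that the folding respects the coherence data, not merely the structure data $s,t,i,\boxminus,\boxvert$. Since $A$ is commutative and $2\Omega A$ is a strict $2$-category (indeed a strict monoidal category delooped), all the unitors and associators are identities, so this reduces to checking identities of the form $1 = 1$; the only genuine content is confirming that the ``holonomy'' bits of the folding/cofolding axioms — the parts that would in general require the nontrivial horizontal morphisms coming from $\Theta$ — collapse precisely because $H2\Omega A$ has a single $1$-cell and because the hypothesis $\Phi_m = id_{\Omega A}$ forces $C^\Phi_1 = \Omega(A\times M)$ rather than a nontrivial extension. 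Once those collapses are recorded, the verification of each axiom is a one-line computation in $A\times M$.
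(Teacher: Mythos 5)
Your proposal follows the paper's proof: the folding it constructs is exactly the map $\Lambda_{m,1}^{1,m}(m,a)=(1,a)$, which is what you call ``the identity assignment on the $A$-component after relabelling vertical edges,'' and the only axiom with real content is compatibility with vertical composition, where the hypothesis $\Phi_m=\mathrm{id}_{\Omega A}$ is invoked to ensure $(m',a')\boxvert(m,a)=(m'm,a'a)$. Two small points of precision: write the map unambiguously as $(m,a)\mapsto(1,a)$ (literally $(m,a)\mapsto(m,a)$ does not land in globular squares), and the vertical-composition axiom involves pasting $\Lambda$-images against holonomies rather than composing them directly, so it is worth displaying the $2\times 2$ grid decomposition of $(1,a'a)$ as the paper does instead of calling it immediate.
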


\begin{proof}
Let $M,A$ be monoids. Suppose that $A$ is commutative. Let $\Phi$ be a functor in $Fun(\Omega M,\mbox{\textbf{Cat}}^\otimes)_{2\Omega A}$, such that $\Phi_m=id_{\Omega A}$ for every $m\in M$. We wish to prove in this case that the trivial holonomy on $C^\Phi$ extends to a folding on $C^\Phi$.

We wish to prove that the trivial holonomy on $C^\Phi$ extends to a 2-functor $\Lambda:C^\Phi_0\to \mbox{\textbf{Q}}2\Omega A$ such that $H\Lambda=id_{2\Omega A}$ and such that $\Lambda$ is fully faithful on squares. For every square boundary:

\begin{center}
\begin{tikzpicture}
  \matrix (m) [matrix of math nodes,row sep=4em,column sep=4em,minimum width=2em]
  {
     \ast&\ast\\ 
     \ast&\ast\\};
  \path[-stealth]
    (m-1-1) edge [red]node [above] {} (m-1-2)
            edge node [left] {$m$}(m-2-1)
    (m-2-1) edge [red]node [below]{} (m-2-2)
    (m-1-2) edge node [right]{$m$}(m-2-2);
\end{tikzpicture}

\end{center}

\noindent in $C^\Phi$ we construct a bijection $\Lambda_{m,1}^{1,m}$ between the collection of squares, in $C^\Phi$, of the following two forms

\begin{center}
\begin{tikzpicture}
  \matrix (m) [matrix of math nodes,row sep=4em,column sep=4em,minimum width=2em]
  {
     \ast&\ast&\ast&\ast\\ 
     \ast&\ast&\ast&\ast\\};
  \path[-stealth]
    (m-1-1) edge [red]node [above] {} (m-1-2)
            edge node [left] {$m$}(m-2-1)
    (m-2-1) edge [red]node [below]{} (m-2-2)
    (m-1-2) edge node [right]{$m$}(m-2-2)
    (m-1-1) edge [white] node [black][fill=white]{$(m,a)$}(m-2-2)

    (m-1-3) edge [red]node {} (m-1-4)
            edge [blue] node [black][left] {$1$}(m-2-3)
    (m-2-3) edge [red]node {} (m-2-4)
    (m-1-4) edge [blue] node [black][right] {$1$}(m-2-4)
    (m-1-3) edge [white] node [black][fill=white]{$(1,a)$}(m-2-4);
\end{tikzpicture}

\end{center}

\noindent such that the collection $\Lambda$ of these bijection forms the desired 2-functor. We define $\Lambda_{m,1}^{1,m}$ as the function associating a 2-morphism $(m,a)$ of the form on the left-hand side above, the square $(1,a)$. Thus defined $\Lambda_{m,1}^{1,m}$ is clearly a bijection from the set of diagrams of the form on the left-hand side above to the set of diagrams of the form on the right-hand side above. It is easily seen that the collection $\Lambda$ of bijections of the form $\Lambda_{m,1}^{1,m}$, with $m\in M$, is the identity on globular 2-morphisms of $C^\Phi$ and that is compatible with identities and horizontal composition in $C^\Phi$. We prove that $\Lambda$ is compatible with vertical composition in $C^\Phi$. To this end consider a vertical composition of squares of the form:

\begin{center}

\begin{tikzpicture}
  \matrix (m) [matrix of math nodes,row sep=4em,column sep=4em,minimum width=2em]
  {
     \ast&\ast \\
     \ast&\ast \\
     \ast&\ast \\};
  \path[-stealth]
   (m-1-1) edge [red] node {} (m-1-2)
            edge node [left]{$m$}(m-2-1)
    (m-2-1) edge [red] node {} (m-2-2)
    (m-1-2) edge node [right]{$m$}(m-2-2)
    (m-1-1) edge [white] node [black][fill=white]{$(m,a)$}(m-2-2)
    (m-2-1) edge node [left] {$m'$}(m-3-1)
    (m-2-2) edge node [right]{$m'$}(m-3-2)
    (m-2-1) edge [white] node [black][fill=white]{$(m',a')$}(m-3-2)
    (m-3-1) edge [red] node {} (m-3-2);
    
\end{tikzpicture}
\end{center}

\noindent by the way $\Phi$ was chosen, the bove diagram is equal to the square:

\begin{center}

\begin{tikzpicture}
  \matrix (m) [matrix of math nodes,row sep=4em,column sep=4em,minimum width=2em]
  {
     \ast&\ast \\
     \ast&\ast \\};
  \path[-stealth]
   (m-1-1) edge [red] node {} (m-1-2)
            edge node [left]{$m'm$}(m-2-1)
    (m-2-1) edge [red] node {} (m-2-2)
    (m-1-2) edge node [right]{$m'm$}(m-2-2)
    (m-1-1) edge [white] node [black][fill=white]{$(m'm,a'a)$}(m-2-2);
    
\end{tikzpicture}
\end{center}

\noindent Applying $\Lambda$ to the above square we obtain the square:

\begin{center}

\begin{tikzpicture}
  \matrix (m) [matrix of math nodes,row sep=4em,column sep=4em,minimum width=2em]
  {
     \ast&\ast \\
     \ast&\ast \\};
  \path[-stealth]
   (m-1-1) edge [red] node {} (m-1-2)
            edge [blue] node [black][left]{$1$}(m-2-1)
    (m-2-1) edge [red] node {} (m-2-2)
    (m-1-2) edge [blue] node [black][right]{$1$}(m-2-2)
    (m-1-1) edge [white] node [black][fill=white]{$(1,a'a)$}(m-2-2);
    
\end{tikzpicture}
\end{center}

\noindent which in turn is equal to the square:

\begin{center}

\begin{tikzpicture}
  \matrix (m) [matrix of math nodes,row sep=4em,column sep=4em,minimum width=2em]
  {
     \ast&\ast&\ast \\
     \ast&\ast&\ast \\
     \ast&\ast&\ast\\};
  \path[-stealth]
   (m-1-1) edge [red] node {} (m-1-2)
            edge [blue]node {}(m-2-1)
    (m-2-1) edge [red] node {} (m-2-2)
    (m-1-2) edge [blue] node {}(m-2-2)
    (m-1-1) edge [white] node [black][fill=white]{$(1,a)$}(m-2-2)
    (m-2-1) edge [blue] node {}(m-3-1)
    (m-2-2) edge [blue]node {}(m-3-2)
    (m-2-1) edge [white] node [black][fill=white]{$(1,1)$}(m-3-2)
    (m-3-1) edge [red] node {} (m-3-2)
    
    (m-1-2) edge [red] node {} (m-1-3)
    (m-2-2) edge [red] node {} (m-2-3)
    (m-1-3) edge [blue] node {}(m-2-3)
    (m-1-2) edge [white] node [black][fill=white]{$(1,1)$}(m-2-3)
    (m-2-3)edge [blue]node {}(m-3-3)
    (m-3-2)edge[red] node {}(m-3-3)
    (m-2-2) edge [white] node [black][fill=white]{$(1,a')$}(m-3-3);
    
\end{tikzpicture}
\end{center}

\noindent We conclude that $\Lambda$, as defined above, is compatible with vertical composition in $C^\Phi$ and thus that $\Lambda$ is a folding on $C^\Phi$. a similar argument proves that the trivial coholonomy on $C^\Phi$ extends to a cofolding on $C^\Phi$. This concludes the proof.

\end{proof}

\noindent From proposition \ref{fold1} and from \cite{SchulmanFramed} we have the following corollary.

\begin{cor}\label{framed1}
Let $M,A$ be monoids. Suppose $A$ is commutative. Let $\Phi$ be a functor in $Fun(\Omega M,\mbox{\textbf{Cat}}^\otimes)_{2\Omega A}$ such that $\Phi_m=id_{\Omega A}$ for every $m\in M$. In that case $C^\Phi$ is a framed 2-category.
\end{cor}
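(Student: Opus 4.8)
The plan is to read Corollary \ref{framed1} off directly from Proposition \ref{fold1} together with the characterization, recorded in \cite{SchulmanFramed}, of framed 2-categories as precisely those double categories carrying both a folding and a cofolding; no fresh computation about $C^\Phi$ should be required. Indeed, Proposition \ref{fold1} already does the substantive work: under the standing hypotheses ($M,A$ monoids, $A$ commutative, and $\Phi_m=id_{\Omega A}$ for every $m\in M$) it equips $C^\Phi$ with an explicit folding $\Lambda$ extending the trivial holonomy, together with the symmetric construction of a cofolding.

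The step I would then carry out is simply to invoke \cite{SchulmanFramed}: a double category equipped with a folding and a cofolding is a framed 2-category. Feeding in the folding $\Lambda$ and the cofolding produced by Proposition \ref{fold1} immediately yields that $C^\Phi$ is a framed 2-category, which is the assertion.

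The only delicate point, and the one I would be most careful about, is matching conventions between Proposition \ref{fold1} and the cited characterization: I would check that $H\Lambda=id_{2\Omega A}$, that each component $\Lambda_{m,1}^{1,m}$ is a boundary-preserving bijection on squares, and that $\Lambda$ is compatible with both the vertical and the horizontal composition of $C^\Phi$. All three were established inside the proof of Proposition \ref{fold1}, so the corollary is purely formal and I do not expect a genuine obstacle beyond this bookkeeping.
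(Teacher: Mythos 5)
Your proposal matches the paper's argument exactly: the paper offers no separate proof of Corollary \ref{framed1}, instead stating only that it follows from Proposition \ref{fold1} together with \cite{SchulmanFramed}, which is precisely the deduction you carry out. The one small caution is that you phrase the citation as a biconditional characterization (framed 2-categories \emph{are precisely} those with a folding and a cofolding), whereas only the one implication (folding and cofolding imply framed) is needed and is what the paper implicitly invokes; otherwise the bookkeeping checks you list are sensible and the argument is sound.
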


\noindent The following is an example of a double category of the form $C^\Phi$ with $M,D$ as in propostion \ref{fold1} but with non-identity $\Phi$ such that $C^\Phi$ does not admit a folding or a cofolding and is thus not a framed bicategory.

\begin{ex}
In the notation of proposition \ref{fold1} let $M=\mathbb{Z}_2$, let $D=\Omega \mathbb{Z}_3$, and let $\Phi$ be the functor in $Fun(\Omega\mathbb{Z}_2,\mbox{\textbf{Cat}}^\otimes)_{2\Omega\mathbb{Z}_3}$ such that $\Phi_a(m)=am$ for every $a\in\mathbb{Z}_2,m\in\mathbb{Z}_3$ where we write $\mathbb{Z}_2$ multiplicatively and $\mathbb{Z}_3$ additively. The double category $C^\Phi$ has a single horizontal morphism with endomorphism group isomorphic to $D_6$. We prove that $C^\Phi$ does not admit foldings. Suppose $\Lambda$ is a folding of $C^\Phi$. $\Lambda$ establishes a bijection between squares of the following two forms:

\begin{center}
\begin{tikzpicture}
  \matrix (m) [matrix of math nodes,row sep=4em,column sep=4em,minimum width=2em]
  {
     \ast&\ast&\ast&\ast\\ 
     \ast&\ast&\ast&\ast\\};
  \path[-stealth]
    (m-1-1) edge [red] node {} (m-1-2)
            edge node [left] {$-1$}(m-2-1)
    (m-2-1) edge [red] node {} (m-2-2)
    (m-1-2) edge node [right]{$-1$}(m-2-2)

    (m-1-3) edge [red] node [above]{} (m-1-4)
            edge [blue] node [black][left] {$1$}(m-2-3)
    (m-2-3) edge [red] node [below] {} (m-2-4)
    (m-1-4) edge [blue] node [black][right] {$1$}(m-2-4);
\end{tikzpicture}

\end{center}

\noindent Moreover $\Lambda$ is compatible with vertical and horizontal composition in $C^\Phi$. Compatibility of $\Lambda$ with horizontal composition says that $\varphi:\mathbb{Z}_3\to\mathbb{Z}_3$ such that $\Lambda(-1,a)=(1,\varphi(a))$ for every $a\in\mathbb{Z}_3$ is such that $\varphi\in Aut(\mathbb{Z}_3)= \mathbb{Z}_2$. Now, the compatibility of $\Lambda$ with vertical composition in $C^\Phi$ says that $\Lambda(-1,a)\boxminus\Lambda(-1,a')=\Lambda(1,a'-a)$ for every $a,a'\in\mathbb{Z}_3$, but the left hand side of the above equation is equal to $(1,\varphi(a)+\varphi(a'))$. Thus $\varphi$ is an automorphism of $\mathbb{Z}_3$ satisfying the equation $\varphi(a)+\varphi(a')=a-a'$ for every $a,a'\in\mathbb{Z}_3$. Such $\varphi$ does not exist and thus $C^\Phi$ does not admit foldings.
\end{ex}

\section{Vertical length}

\noindent In this section we articulate the idea that relations between horizontal and vertical compositions of globular and horizontal identity squares in double categories constructed through the methods of theorem \ref{thmmain} should be relatively simple to understand. We do this through the notion of vertical length. We refer the reader to \cite{yo1} for the precise definitions.

\

\noindent For the convenience of the reader we rephrase the definition of double categories of vertical length 1 in a way specific to the pourpuses of this section. Given a double category $C$ we write $V^1_{\gamma C}$ for the first vertical category of $C$, i.e. $V^1_{\gamma C}$ denotes the subcategory of $C_1$ generated by the globular and horizontal identity squares of $C$. The second vertical category $V^2_{\gamma C}$ of $C$ is the subcategory of $C_1$ generated by horizontal compositions of morphisms in $V^1_{\gamma C}$. We say that a double category $C$ has vertical length 1, $\ell C=1$ in symbols, if $V^1_{\gamma C}=V^2_{\gamma C}$ or equivalently if $V^1_{\gamma C}$ is closed under the operation of taking horizontal compositions. Intuitively a double category $C$ has vertical length 1 whenever given two squares of the form:

\begin{center}
\begin{tikzpicture}
  \matrix (m) [matrix of math nodes,row sep=4em,column sep=4em,minimum width=2em]
  {
     a&a&a&a\\ 
     b&b&b&b\\};
  \path[-stealth]
    (m-1-1) edge node [above] {$\alpha$} (m-1-2)
            edge node [left] {$f$}(m-2-1)
    (m-2-1) edge node [below]{$\beta$} (m-2-2)
    (m-1-2) edge node [right]{$f$}(m-2-2)
    (m-1-1) edge [white] node [black][fill=white]{$\varphi$}(m-2-2)

    (m-1-3) edge node [above]{$\alpha'$} (m-1-4)
            edge node [left] {$f$}(m-2-3)
    (m-2-3) edge node [below] {$\beta'$} (m-2-4)
    (m-1-4) edge node [right]{$f$}(m-2-4)
    (m-1-3) edge [white] node [black][fill=white]{$\psi$}(m-2-4);
\end{tikzpicture}

\end{center}

\noindent in $C$ and factorizations of $\varphi$ and $\psi$ as vertical compositions of squares of the form:

\begin{center}
\begin{tikzpicture}
  \matrix (m) [matrix of math nodes,row sep=4em,column sep=4em,minimum width=2em]
  {
     \bullet&\bullet&\bullet&\bullet\\ 
     \bullet&\bullet&\bullet&\bullet\\};
  \path[-stealth]
    (m-1-1) edge [red] node [above] {} (m-1-2)
            edge node {}(m-2-1)
    (m-2-1) edge [red] node {} (m-2-2)
    (m-1-2) edge node {}(m-2-2)

    (m-1-3) edge node {} (m-1-4)
            edge [blue] node {}(m-2-3)
    (m-2-3) edge node {} (m-2-4)
    (m-1-4) edge [blue] node {}(m-2-4);
\end{tikzpicture}

\end{center}

\noindent the terms of these factorizations of $\varphi$ and $\psi$ can be re-arranged in such a way that $\varphi$ and $\psi$ can be composed horizontally by composing terms of the corresponding factorizations horizontally one by one. An explicit description of 2-morphisms in $V^1_{\gamma C}$ for any double category $C$ is provided in \cite{yo1}. The following lemma uses this result to provide an explicit description of squares in $V^1_{\gamma C}$ for every double category $C$.

\begin{lem}\label{lemmaimportant}
Let $(\mathcal{B}^*,\mathcal{B})$ be a decorated bicategory. Let $\Phi$ be an object of $Fun(\mathcal{B}^*,\mbox{\textbf{Cat}}^\otimes)_\mathcal{B}$. Let $(f,\varphi)$ be a morphism in $\int_{\mathcal{B}^*}\Phi$ with domain and codomain, in $\mathcal{B}$, the 1-cells $\alpha,\beta$ of 0-cells $a,b$ in $\mathcal{B}$ respectively. The following three conditions for $(f,\varphi)$ are equivalent:

\begin{enumerate}
    \item $(f,\varphi)$ is a morphism in $V^1_{\gamma C^\Phi}$.
    \item $(f,\varphi)$ admits a factorization in $C^\Phi$ as:
    
    \begin{center}

\begin{tikzpicture}
  \matrix (m) [matrix of math nodes,row sep=3em,column sep=7em,minimum width=2em]
  {
     \alpha&\beta \\
     i_a&i_b \\};
  \path[-stealth]
    (m-1-1) edge node [left] {$(id_a,\psi)$}(m-2-1)
            edge node [above] {$(f,\varphi)$}(m-1-2)
    (m-2-1) edge node [below] {$(f,id_{i_a})$} (m-2-2)
    (m-2-2) edge node [right] {$(id_\beta,\eta)$} (m-1-2);
\end{tikzpicture}
\end{center}
    
    \noindent for 2-cells $\psi,\eta$ in $\mathcal{B}$ from $\alpha$ to $i_a$ and from $i_b$ to $\beta$ respectively. 
    
    \item $(f,\varphi)$ admits a pictorial representation in $C^\Phi$ as:
    
   \begin{center}
\begin{tikzpicture}
  \matrix (m) [matrix of math nodes,row sep=4em,column sep=4em,minimum width=2em]
  {
     a&a \\
     a&a \\
     b&b \\
     b&b \\};
  \path[-stealth]
    (m-1-1) edge node [above] {$\alpha$} (m-1-2)
            edge [blue] node {}(m-2-1)
    (m-2-1) edge [red] node {} (m-2-2)
    (m-1-2) edge [blue] node {}(m-2-2)
    (m-1-1) edge [white] node [black][fill=white]{$\psi$}(m-2-2)
    (m-2-1) edge node [left]{$f$}(m-3-1)
    (m-2-2) edge node [right]{$f$}(m-3-2)
    (m-2-1) edge [white] node [black][fill=white]{$(f,\varphi)$}(m-3-2)
    (m-3-1) edge [red] node {} (m-3-2)
    (m-3-1) edge [blue] node {}(m-4-1)
    (m-3-2) edge [blue] node {}(m-4-2)
    (m-4-1) edge node [below] {$\beta$} (m-4-2)
    (m-3-1) edge [white] node [black][fill=white] {$\eta$}(m-4-2);
\end{tikzpicture}

\end{center}

    \noindent for 2-cells $\psi,\eta$ in $\mathcal{B}$ from $\alpha$ to $i_a$ and from $i_b$ to $\beta$ respectively.
\end{enumerate}
\end{lem}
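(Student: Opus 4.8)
The plan is to settle $(3)\Rightarrow(1)$ and $(2)\Leftrightarrow(3)$ by unwinding definitions and to concentrate the real work on $(1)\Rightarrow(2)$. The equivalence $(2)\Leftrightarrow(3)$ is purely notational: read inside $C^\Phi$ according to the pictorial conventions of Section~\ref{prelim} and the description of the structure functors of $C^\Phi$ in the proof of Theorem~\ref{thmmain}, the left-hand edge of the square in $(2)$ is the globular square $\psi\colon\alpha\to i_a$, the right-hand edge (drawn upward) is the globular square $\eta\colon i_b\to\beta$, and the bottom edge is the horizontal identity square $i_f$, so the commuting square of $(2)$ is exactly the vertical pasting displayed in $(3)$. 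For $(3)\Rightarrow(1)$, that pasting exhibits $(f,\varphi)$ as the vertical composite $(id_b,\eta)\boxvert i_f\boxvert(id_a,\psi)$ of two globular squares and one horizontal identity square of $C^\Phi$; since $V^1_{\gamma C^\Phi}$ is, by the definition recalled above, the subcategory of $C^\Phi_1$ generated under vertical composition by the globular and horizontal identity squares of $C^\Phi$, it follows that $(f,\varphi)\in V^1_{\gamma C^\Phi}$.

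For $(1)\Rightarrow(2)$ I would invoke the explicit description of the squares of $V^1_{\gamma C}$ from \cite{yo1}: a morphism of $C^\Phi_1$ belongs to $V^1_{\gamma C^\Phi}$ precisely when it factors, in $C^\Phi_1$, as a finite vertical composite of globular squares and horizontal identity squares of $C^\Phi$. Starting from such a factorization of $(f,\varphi)$, I would first merge maximal runs of consecutive globular factors (globular squares are closed under $\boxvert$) and of consecutive horizontal identity factors (using $i_g\boxvert i_h=i_{gh}$), reducing to an alternating composite $(f,\varphi)=G_k\boxvert i_{g_k}\boxvert G_{k-1}\boxvert\cdots\boxvert i_{g_1}\boxvert G_0$ with each $G_j$ globular. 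Tracking sources, targets and vertical domains and codomains along this composite, using the way the structure functors of $C^\Phi$ were defined in Theorem~\ref{thmmain}, then pins down the data: the $g_j$ form a composable string in $\mathcal{B}^*$ whose composite is $f$; the outer globular squares satisfy $G_0\colon\alpha\to i_a$ and $G_k\colon i_b\to\beta$; and each interior $G_j$ ($1\le j\le k-1$) is an endomorphism, in $\mathcal{B}$, of the identity $1$-cell of some $0$-cell $b_j$. Finally I would compute the composite with the composition law of $\int_{\mathcal{B}^*}\Phi$ from Definition~\ref{Grothendieckdefi}, the functoriality $\Phi_{g}\Phi_{h}=\Phi_{gh}$, and the standing hypothesis that each $\Phi_g$ is \emph{monoidal}: monoidality forces every composite $\Phi_{g_k\cdots g_{j+1}}$ to carry the unit of $b_j$ to the unit $i_b$, so transporting each interior $G_j$ up the string turns it into an endomorphism of $i_b$, and these endomorphisms together with $G_k$ compose to a single globular square $\eta\colon i_b\to\beta$; setting $\psi:=G_0\colon\alpha\to i_a$ and unwinding the composition law once more gives $(f,\varphi)=(f,\eta\circ\Phi_f(\psi))=(id_b,\eta)\boxvert i_f\boxvert(id_a,\psi)$, which is condition $(2)$.

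The main obstacle, as I see it, is exactly this last normalization in $(1)\Rightarrow(2)$: carrying out the vertical-domain/codomain bookkeeping cleanly enough to recognize that the interior globular factors are unit endomorphisms, and then checking — most transparently by induction on the number $k$ of horizontal identity blocks — that monoidality of the transition functors $\Phi_g$ is precisely the ingredient that collapses the whole alternating composite into the three-term form $\eta\boxvert i_f\boxvert\psi$. The remaining implications amount to unwinding the constructions of Section~\ref{prelim} and of Theorem~\ref{thmmain}.
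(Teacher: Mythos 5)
Your proposal is correct and follows essentially the same route as the paper: the equivalence of (2) and (3) and the implication (3)$\Rightarrow$(1) are treated as definitional unwindings, and (1)$\Rightarrow$(2) rests on the same key sliding identity $i_f\boxvert(id_a,\psi)=(id_b,\Phi_f(\psi))\boxvert i_f$ (both sides equal $(f,\Phi_f(\psi))$ by the Grothendieck composition law and strict monoidality of $\Phi_f$), iterated to collapse an alternating globular/identity composite to the three-term form $\eta\boxvert i_f\boxvert\psi$. Your version is somewhat more explicit than the paper about merging consecutive blocks of the same type and tracking vertical boundaries, but the underlying argument is the same.
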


\begin{proof}
Let $(\mathcal{B}^*,\mathcal{B})$ be a decorated bicategory. Let $\Phi$ be an object in the category $Fun(\mathcal{B}^*,\mbox{\textbf{Cat}}^\otimes)_\mathcal{B}$. Let $(f,\varphi)$ be a morphism in $\int_{\mathcal{B}^*}\Phi$ with domain and codomain the endomorphisms $\alpha,\beta$ of 0-cells $a,b$ in $\mathcal{B}$ respectively. We wish to prove that the three conditions in the statement of the lemma are equivalent.

Condition 3 is a pictorial interpretation of condition 2 above and these conditions are easily seen to be equivalent. We thus wish to prove that conditions 2 and 3 are equivalent to condition 1. Observe first that if $(f,\varphi)$ satisfies conditions 2 and 3 then $(f,\varphi)$ admits a representation as a vertical composition of globular squares of $C^\Phi$ and horizontal identities of vertical morphisms of $C^\Phi$. Thus, if $(f,\varphi)$ satisfies conditions 2 and 3 then $(f,\varphi)$ is a morphism in $V^1_{\gamma C^\Phi}$. We prove that if $(f,\varphi)$ is a morphism in $V^1_{\gamma C^\Phi}$ then $(f,\varphi)$ admits a pictorial representation as in condition 3 above.

Observe first that given objects $a,b$ in $\mathcal{B}^*$, an endomorphism $\psi$ of $i_a$ in $\mathcal{B}$ and a morphism $f:a\to b$ in $\mathcal{B}^*$ the two squares of $C^\Phi$ represented by the diagrams:

\begin{center}

\begin{tikzpicture}
  \matrix (m) [matrix of math nodes,row sep=4em,column sep=4em,minimum width=2em]
  {
     a&a&a&a \\
     b&b&b&b \\
     b&b&b&b\\};
  \path[-stealth]
   (m-1-1) edge [red] node {} (m-1-2)
            edge node [left]{$f$}(m-2-1)
    (m-2-1) edge [red] node {} (m-2-2)
    (m-1-2) edge node [right]{$f$}(m-2-2)
    (m-1-1) edge [white] node [black][fill=white]{$i_f$}(m-2-2)
    (m-2-1) edge [blue] node {}(m-3-1)
    (m-2-2) edge [blue]node {}(m-3-2)
    (m-2-1) edge [white] node [black][fill=white]{$\Phi_f(\psi)$}(m-3-2)
    (m-3-1) edge [red] node {} (m-3-2)
    
    (m-1-3) edge [red] node {} (m-1-4)
    (m-2-3) edge [red] node {} (m-2-4)
    (m-1-3) edge [blue] node {}(m-2-3)
    (m-1-4) edge [blue] node {}(m-2-4)
    (m-1-3) edge [white] node [black][fill=white]{$\psi$}(m-2-4)
    (m-2-3) edge node [left]{$f$}(m-3-3)
    (m-2-4)edge node [right]{$f$}(m-3-4)
    (m-3-3)edge[red] node {}(m-3-4)
    (m-2-3) edge [white] node [black][fill=white]{$i_f$}(m-3-4);
    
\end{tikzpicture}
\end{center}

\noindent are equal if and only if the equation 

\[(f,id_{i_a})\boxvert(id_a,\psi)=(id_b,\Phi_f(\psi))(f,id_{i_a})\]

\noindent holds. It is easily see that the left hand side and the right hand side of the above equation are both equal to $(f,\Phi_f(\psi))$. The two squares in $C^\Phi$ represented above are thus always equal. By iterating this equation we obtain the fact that for any 2-morphism of $C^\Phi$ admitting a factorization as a vertical composition of 2-morphisms admitting pictorial representations as:

\begin{center}
\begin{tikzpicture}
  \matrix (m) [matrix of math nodes,row sep=4em,column sep=4em,minimum width=2em]
  {
     a&a \\
     a&a \\
     b&b \\
     b&b \\};
  \path[-stealth]
    (m-1-1) edge [red] node {} (m-1-2)
            edge [blue] node {}(m-2-1)
    (m-2-1) edge [red] node {} (m-2-2)
    (m-1-2) edge [blue] node {}(m-2-2)
    (m-2-1) edge node [left]{$f$}(m-3-1)
    (m-2-2) edge node [right]{$f$}(m-3-2)
    (m-2-1) edge [white] node [black][fill=white]{$i_f$}(m-3-2)
    (m-3-1) edge [red] node {} (m-3-2)
    (m-3-1) edge [blue] node {}(m-4-1)
    (m-3-2) edge [blue] node {}(m-4-2)
    (m-4-1) edge [red] node {} (m-4-2);
\end{tikzpicture}

\end{center}

\noindent admits a pictorial representation as:

\begin{center}
\begin{tikzpicture}
  \matrix (m) [matrix of math nodes,row sep=4em,column sep=4em,minimum width=2em]
  {
     a&a \\
     a&a \\
     b&b \\};
  \path[-stealth]
    (m-1-1) edge [red] node {} (m-1-2)
            edge [blue] node {}(m-2-1)
    (m-2-1) edge [red] node {} (m-2-2)
    (m-1-2) edge [blue] node {}(m-2-2)
    (m-2-1) edge node [left]{$f$}(m-3-1)
    (m-2-2) edge node [right]{$f$}(m-3-2)
    (m-2-1) edge [white] node [black][fill=white]{$i_f$}(m-3-2)
    (m-3-1) edge [red] node {} (m-3-2);
\end{tikzpicture}

\end{center}

\noindent From this and from the obvious fact that every 2-morphism in $C^\Phi$ admitting a diagrammatic representation as:

\begin{center}
\begin{tikzpicture}
  \matrix (m) [matrix of math nodes,row sep=4em,column sep=4em,minimum width=2em]
  {
     a&a \\
     a&a \\
     a&a \\
     b&b \\};
  \path[-stealth]
    (m-1-1) edge node [above]{$\alpha$} (m-1-2)
            edge [blue] node {}(m-2-1)
    (m-2-1) edge [red] node {} (m-2-2)
    (m-1-2) edge [blue] node {}(m-2-2)
    (m-2-1) edge [blue]node {}(m-3-1)
    (m-2-2) edge [blue]node {}(m-3-2)
    (m-3-1) edge [red] node {} (m-3-2)
    (m-3-1) edge node [left] {$f$}(m-4-1)
    (m-3-2) edge node [right] {$f$}(m-4-2)
    (m-4-1) edge [red] node {} (m-4-2)
    (m-3-1) edge [white] node [black][fill=white]{$i_f$}(m-4-2);
\end{tikzpicture}

\end{center}

\noindent admits a pictorial representation of the form:

\begin{center}
\begin{tikzpicture}
  \matrix (m) [matrix of math nodes,row sep=4em,column sep=4em,minimum width=2em]
  {
     a&a \\
     a&a \\
     b&b \\};
  \path[-stealth]
    (m-1-1) edge node [above] {$\alpha$} (m-1-2)
            edge [blue] node {}(m-2-1)
    (m-2-1) edge [red] node {} (m-2-2)
    (m-1-2) edge [blue] node {}(m-2-2)
    (m-2-1) edge node [left]{$f$}(m-3-1)
    (m-2-2) edge node [right]{$f$}(m-3-2)
    (m-2-1) edge [white] node [black][fill=white]{$i_f$}(m-3-2)
    (m-3-1) edge [red] node {} (m-3-2);
\end{tikzpicture}

\end{center}

\noindent We conclude that whenever $(f,\varphi)$ is a morphism in $V^1_{\gamma C^\Phi}$ then $(f,\varphi)$ admits a presentation as in condition 3 in the proposition. This concludes the proof.

\end{proof}

\begin{prop}\label{vertlengthprop}
Let $(\mathcal{B}^*,\mathcal{B})$ be a decorated bicategory. Let $\Phi$ be a functor in $Fun(\mathcal{B}^*,\mbox{\textbf{Cat}}^\otimes)_{\mathcal{B}}$. In that case $\ell C^\Phi=1$.
\end{prop}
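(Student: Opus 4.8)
The plan is to show directly that $V^1_{\gamma C^\Phi}$ is closed under horizontal composition, which by the rephrased definition of vertical length given above is exactly the statement $\ell C^\Phi = 1$. The key tool is Lemma \ref{lemmaimportant}, which gives an explicit description of morphisms in $V^1_{\gamma C^\Phi}$: a morphism $(f,\varphi)$ of $\int_{\mathcal{B}^*}\Phi$ lies in $V^1_{\gamma C^\Phi}$ if and only if it factors in $C^\Phi$ as $(id_\beta,\eta)\boxvert(f,id_{i_a})\boxvert(id_a,\psi)$ for suitable globular 2-cells $\psi:\alpha\Rightarrow i_a$ and $\eta:i_b\Rightarrow\beta$ in $\mathcal{B}$. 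Note also that the globular squares of $C^\Phi$ (i.e. the 2-cells of $\mathcal{B}$) and the horizontal identities $i_f$ all lie in $V^1_{\gamma C^\Phi}$, and that by the very construction in Theorem \ref{thmmain} the category $\tilde{\mathcal{B}}_2$-part of $\int^*_{\mathcal{B}^*}\Phi$ consists only of globular squares, which are manifestly closed under $\boxminus$ (it is the horizontal composition of $\mathcal{B}$). So the only content is horizontal composability of morphisms coming from the $\int_{\mathcal{B}^*}\Phi$ part.

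First I would take two horizontally composable morphisms $(f,\varphi):\alpha\to\beta$ and $(f,\varphi'):\alpha'\to\beta'$ in $V^1_{\gamma C^\Phi}$, with common source and target $f:a\to b$ (the squares must share $f$ since, as observed in the proof of Theorem \ref{thmmain}, non-identity source/target morphisms of horizontally composable pairs must coincide), where $\alpha,\alpha',\beta,\beta'$ are endomorphisms of $a,b$. By Lemma \ref{lemmaimportant}, write $(f,\varphi)=(id_\beta,\eta)\boxvert(f,id_{i_a})\boxvert(id_a,\psi)$ and $(f,\varphi')=(id_{\beta'},\eta')\boxvert(f,id_{i_a})\boxvert(id_a,\psi')$ for globular $\psi,\psi',\eta,\eta'$. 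The horizontal composite $(f,\varphi)\boxminus(f,\varphi')$ is then, using the interchange law (functoriality of $\boxminus$, established in Theorem \ref{thmmain}),
\[
(f,\varphi)\boxminus(f,\varphi') = \big[(id_\beta,\eta)\boxminus(id_{\beta'},\eta')\big]\,\boxvert\,\big[(f,id_{i_a})\boxminus(f,id_{i_a})\big]\,\boxvert\,\big[(id_a,\psi)\boxminus(id_a,\psi')\big].
\]
Now $(id_a,\psi)\boxminus(id_a,\psi')$ is a globular square (it is $\psi\circledast\psi':\alpha\circledast\alpha'\Rightarrow i_a\circledast i_a$, and $i_a\circledast i_a$ is isomorphic to $i_a$ via the unit coherence of $\mathcal{B}$), similarly $(id_\beta,\eta)\boxminus(id_{\beta'},\eta')$ is globular, and $(f,id_{i_a})\boxminus(f,id_{i_a}) = (f,id_{i_a}\circledast id_{i_a})$, which under the canonical coherence isomorphism $i_a\circledast i_a\cong i_a$ is precisely $i_f$ again. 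Hence the composite is exactly of the shape in condition 2 of Lemma \ref{lemmaimportant} (after absorbing the coherence isomorphisms, which are themselves globular squares in $V^1_{\gamma C^\Phi}$), so it lies in $V^1_{\gamma C^\Phi}$.

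The step I expect to be the main obstacle is the bookkeeping around the unit coherence isomorphisms $i_a\circledast i_a\cong i_a$ in $\mathcal{B}$: one must check that composing these into $\psi\circledast\psi'$, $\eta\circledast\eta'$, and the middle term really does produce data of the precise form $(id,\tilde\psi)$, $(f,id_{i_a})$, $(id,\tilde\eta)$ demanded by Lemma \ref{lemmaimportant}, rather than merely something equivalent to it. Since these coherence cells are globular and $V^1_{\gamma C^\Phi}$ is by definition a subcategory closed under $\boxvert$ and containing all globular squares, this absorption is legitimate, but it should be spelled out. I would also remark that the edge cases — one or both factors being globular, or being horizontal identities — are subsumed by the same computation (taking $\psi$ or $\eta$ to be an identity 2-cell). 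Finally, I would note that morphisms in $V^1_{\gamma C^\Phi}$ that happen to be vertical identities or globular compose correctly with the $\int_{\mathcal{B}^*}\Phi$-type morphisms by the same interchange argument, so $V^1_{\gamma C^\Phi} = V^2_{\gamma C^\Phi}$ and $\ell C^\Phi = 1$.
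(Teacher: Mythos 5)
Your proof takes essentially the same route as the paper: apply Lemma~\ref{lemmaimportant} to write each factor in the canonical globular/$i_f$/globular form, use functoriality of $\boxminus$ (interchange) to distribute the horizontal composite over the three layers, and observe that the result is again of that form, hence in $V^1_{\gamma C^\Phi}$. You are a bit more careful than the paper about the unit coherence $i_a\circledast i_a\cong i_a$ (the paper's diagram silently writes the middle layer as $i_f$ again), and your remark that the coherence cells are globular and can therefore be absorbed into $V^1_{\gamma C^\Phi}$ correctly closes that gap.
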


\begin{proof}
Let $a,b$ be objects in $C^\Phi$. Let $f:a\to b$ be a vertical morphism. Let $\alpha,\alpha'$ be horizontal endomorphisms of $a$ and let $\beta,\beta'$ be horizontal endomorphisms of $b$. Let $(f,\varphi)$ and $(f,\varphi')$ be squares in $V^1_{\gamma C^\Phi}$, from $\alpha$ to $\beta$ and from $\alpha'$ to $\beta'$ respectively. By lemma \ref{lemmaimportant} both $(f,\alpha)$ and $(f,\alpha')$ admit a pictorial representation as:

\begin{center}
\begin{tikzpicture}
  \matrix (m) [matrix of math nodes,row sep=4em,column sep=4em,minimum width=2em]
  {
     a&a&a&a\\ 
     a&a&a&a\\
     b&b&b&b\\ 
     b&b&b&b\\};
  \path[-stealth]
    (m-1-1) edge node [above]{$\alpha$} (m-1-2)
            edge [blue] node {}(m-2-1)
    (m-2-1) edge [red] node {} (m-2-2)
    (m-1-2) edge [blue] node {}(m-2-2)
    (m-1-1) edge [white] node [black][fill=white] {$\psi$} (m-2-2)
    (m-2-1) edge node [left]{$f$}(m-3-1)
    (m-2-2) edge node [right]{$f$}(m-3-2)
    (m-3-1) edge [red] node {} (m-3-2)
    (m-2-1) edge [white] node [black][fill=white]{$i_f$}(m-3-2)
    (m-3-1) edge [blue]node {}(m-4-1)
    (m-3-2) edge [blue]node {}(m-4-2)
    (m-4-1) edge node [below]{$\beta$} (m-4-2)
    (m-3-1) edge [white] node [black][fill=white]{$\eta$}(m-4-2)

    (m-1-3) edge node [above]{$\alpha'$} (m-1-4)
            edge [blue] node {}(m-2-3)
    (m-2-3) edge [red] node {} (m-2-4)
    (m-1-4) edge [blue] node {}(m-2-4)
    (m-1-3) edge [white] node [black][fill=white] {$\psi'$} (m-2-4)
    (m-2-3) edge node [left]{$f$}(m-3-3)
    (m-2-4) edge node [right]{$f$}(m-3-4)
    (m-3-3) edge [red] node {} (m-3-4)
    (m-2-3) edge [white] node [black][fill=white]{$i_f$}(m-3-4)
    (m-3-3) edge [blue]node {}(m-4-3)
    (m-3-4) edge [blue] node {}(m-4-4)
    (m-4-3) edge node [below]{$\beta'$} (m-4-4)
    (m-3-3) edge[white] node [black][fill=white]{$\eta'$}(m-4-4);
\end{tikzpicture}

\end{center}

\noindent The pictorial representation of the horizontal composition of the squares above is of the form:

\begin{center}
\begin{tikzpicture}
  \matrix (m) [matrix of math nodes,row sep=4em,column sep=4em,minimum width=2em]
  {
     a&a \\
     a&a \\
     a&a \\
     b&b \\};
  \path[-stealth]
    (m-1-1) edge node [above]{$\alpha\boxminus\alpha'$} (m-1-2)
            edge [blue] node {}(m-2-1)
    (m-2-1) edge [red] node {} (m-2-2)
    (m-1-2) edge [blue] node {}(m-2-2)
    (m-1-1) edge [white] node [black][fill=white] {$\psi\boxminus \psi'$} (m-2-2)
    (m-2-1) edge node [left]{$f$}(m-3-1)
    (m-2-2) edge node [right]{$f$}(m-3-2)
    (m-3-1) edge [red] node {} (m-3-2)
    (m-2-1) edge [white] node [black][fill=white]{$i_f$}(m-3-2)
    (m-3-1) edge [blue]node {}(m-4-1)
    (m-3-2) edge [blue]node {}(m-4-2)
    (m-4-1) edge node [below]{$\beta\boxminus\beta'$} (m-4-2)
    (m-3-1) edge [white] node [black][fill=white]{$\eta\boxminus\eta'$}(m-4-2);
\end{tikzpicture}

\end{center}

\noindent which is clearly a 2-morphism in $V^1_{\gamma C^\Phi}$. This proves that $V^n_{\gamma C^\Phi}=V^1_{\gamma C^\Phi}$ for every positive integer $n$ which proves that the category of morphisms $\gamma C^\Phi_1$ of $C^\Phi$ is equal to $V^1_{\gamma C^\Phi}$. This proves that $\ell\gamma C^\Phi$ and thus that $\ell C^\Phi=1$. This concludes the proof of the proposition.  
\end{proof}

\noindent Observe that the double categories described in theorem \ref{thmmain} are not in general GG. To see this we use the computations provided by lemma \ref{lemmaimportant} \ref{lemmaimportant}.

\begin{ex}\label{exgg}
Let $(\mathcal{B}^*,\mathcal{B})$ be the following decorated bicategory: We make $\mathcal{B}^*$ to be equal to the delooping category $\Omega\mathbb{N}$ of the monoid $\mathbb{N}$. We make $\mathcal{B}$ to be the single object bicategory $2$\textbf{Mat} generated by the usual strictification \textbf{Mat} of the category of complex vector spaces, i.e. \textbf{Mat} is the monoidal category whose collection of objects is $\mathbb{N}$ and whose collection of morphisms is the collection of complex matreces. Composition and tensor product in \textbf{Mat} are the usual composition and tensor product of matreces. Let $\Phi$ be an object of the category $Fun(\Omega\mathbb{N},\mbox{\textbf{Cat}}^\otimes)_{2\mbox{\textbf{Mat}}}$ be such that $\Phi(\ast)=$\textbf{Mat} and such that for every $m\in\mathbb{N}$, $\Phi_m$ acts on \textbf{Mat} by $\Phi_m(n)=mn$ for every $n\in\mathbb{N}$ and by $\Phi_m(A)=A^{\otimes m}$ for every complex matrix $A$. Thus defined $\Phi$ is a functor in $Fun(\Omega \mathbb{N},\mbox{\textbf{Cat}}^\otimes)_{2\mbox{\textbf{Mat}}}$. We show that the double category $C^\Phi$ is not GG. To this end consider the square $(2,id_2)$ in $C^\Phi$. Observe that if $(2,id_2)$ were GG then from lemma \ref{lemmaimportant} $(2,id_2)=id_2^{\otimes 2}$ would admit a factorization as

  \begin{center}

\begin{tikzpicture}
  \matrix (m) [matrix of math nodes,row sep=3em,column sep=7em,minimum width=2em]
  {
     2&4 \\
     0&0 \\};
  \path[-stealth]
    (m-1-1) edge node [left] {$(0,B)$}(m-2-1)
            edge node [above] {$(2,id_2)$}(m-1-2)
    (m-2-1) edge node [below] {$(2,C)$} (m-2-2)
    (m-2-2) edge node [right] {$(0,D)$} (m-1-2);
\end{tikzpicture}
\end{center}

\noindent But were this the case, since the rank $rkB$ and $rkD$ of both $B$ and $D$ is at most 1 then the rank $rk id_2^{\otimes 2}$ of $id_2^{\otimes 2}$ would be at most 1, a contradiction. The 2-morphism $(2,id_2)$ of $C^\Phi$ is thus not GG and $C^\Phi$ is thus not GG.

\end{ex}

\section{Decorations by groups I}

\noindent In this section we study the GG condition on double categories constructed through the methods presented in theorem \ref{thmmain}. We refer the reader to \cite{yo1} for a treatment on GG double categories.

\

\noindent We consider the following situation: Let $G$ be a group. Let $A$ be a commutative monoid. The pair $(\Omega G,2\Omega A)$ is a decorated bicategory. Let $\Phi$ be a functor in $Fun(\Omega G,2\Omega A)$. We consider situations in which the double category $C^\Phi$ associated to $\Phi$ in theorem \ref{thmmain} is GG. Compare this to example \ref{exgg}. We prove the following proposition.

\begin{prop}\label{ggmonoids}
Let $M,A$ be monoids. Suppose $A$ is commutative. Let $\Phi$ be a functor in $Fun(\Omega M,\mbox{\textbf{Cat}}^\otimes)_{2\Omega A}$. Suppose that for every $m\in M$ the endomorphism $\Phi_m$ of $A$ is surjective. In that case $C^\Phi$ is GG.
\end{prop}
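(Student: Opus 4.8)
The plan is to establish the equality $C^\Phi_1=V^1_{\gamma C^\Phi}$. By Proposition \ref{vertlengthprop} we have $\ell C^\Phi=1$, hence $\gamma C^\Phi_1=V^1_{\gamma C^\Phi}$, so this equality is equivalent to $\gamma C^\Phi=C^\Phi$, i.e.\ to $C^\Phi$ being GG. Thus everything reduces to showing that every square of $C^\Phi$ lies in $V^1_{\gamma C^\Phi}$.

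First I would make $C^\Phi$ explicit in the present degenerate situation, exactly as in the discussion of single object decorated bicategories in Section 4. Since $\mathcal{B}=2\Omega A$ has a single $0$-cell $\ast$ and a single $1$-cell $i_\ast$, the category $\tilde{2\Omega A}_2$ is empty, so $C^\Phi_0=\Omega M$ while $C^\Phi_1=\int_{\Omega M}\Phi$, whose morphisms are the pairs $(m,n)$ with $m\in M$ and $n\in A$ (the delooping $\Omega(A\rtimes_\Phi M)$); the globular squares are the pairs $(1_M,n)$, and the horizontal identity of a vertical morphism $m$ is $i_m=(m,1_A)$. In particular every square of $C^\Phi$ is of the form $(m,n)$, so Lemma \ref{lemmaimportant} applies to each of them.

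Next I would feed each $(m,n)$ into Lemma \ref{lemmaimportant}. Here $a=b=\ast$ and $\alpha=\beta=i_\ast$, so the factorization criterion of that lemma specializes to: $(m,n)\in V^1_{\gamma C^\Phi}$ if and only if there are $2$-cells $\psi,\eta$ of $2\Omega A$, that is, elements $\psi,\eta\in A$, with $n=\eta\cdot\Phi_m(\psi)$, the product taken in $A$; concretely this is the computation $(1_M,\eta)\boxvert i_m\boxvert(1_M,\psi)=(m,\eta\cdot\Phi_m(\psi))$ performed with the composition law of $\int_{\Omega M}\Phi$. The surjectivity hypothesis then finishes the argument: given $n\in A$, pick $\psi\in A$ with $\Phi_m(\psi)=n$ and set $\eta=1_A$, so that $n=1_A\cdot\Phi_m(\psi)$ exhibits $(m,n)$ as an element of $V^1_{\gamma C^\Phi}$. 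Since $m$ and $n$ were arbitrary, $V^1_{\gamma C^\Phi}=C^\Phi_1$ and $C^\Phi$ is GG.

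The step I expect to require the most care is the specialization of Lemma \ref{lemmaimportant}: one must verify that in this single-$1$-cell setting the objects $\alpha,\beta,i_a,i_b$ appearing in the statement all collapse to $i_\ast$, so that the globular squares $\psi$ and $\eta$ genuinely range over all of $A$ with no boundary constraint, and one must track correctly the twist by $\Phi_m$ produced by the composition law of the Grothendieck construction. Once the membership condition $n=\eta\cdot\Phi_m(\psi)$ is isolated, invoking surjectivity is immediate. A minor point worth recording along the way is that $\tilde{2\Omega A}_2$ is empty, so there are no squares of $C^\Phi$ lying outside $\int_{\Omega M}\Phi$ that would escape the reach of Lemma \ref{lemmaimportant}.
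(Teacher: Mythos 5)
Your proof is correct and follows essentially the same route as the paper's: reduce, via Proposition \ref{vertlengthprop}, to showing every morphism of $C^\Phi_1$ lies in $V^1_{\gamma C^\Phi}$, then apply Lemma \ref{lemmaimportant} to turn membership in $V^1_{\gamma C^\Phi}$ into the solvability of $n=\eta\cdot\Phi_m(\psi)$, and finish with $\eta=1_A$ and surjectivity of $\Phi_m$. The paper's own proof does precisely this (with $(f,\varphi)$ in place of $(m,n)$ and writing the equation as $(f,\varphi)=(f,1_A)(1_M,\psi)=(f,\Phi_f(\psi))$); your additional remarks that $\tilde{2\Omega A}_2$ is empty and that the boundary constraints trivialize in the single-object, single-$1$-cell situation are correct and simply make the specialization of Lemma \ref{lemmaimportant} explicit.
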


\begin{proof}
Let $M,A$ be monoids. Suppose $A$ is commutative. Let $\Phi$ be an object of $Fun(\Omega M,\mbox{\textbf{Cat}}^\otimes)_{2\Omega A}$ such that for every $m\in M$ the endomorphism $\Phi_m$ of $A$ is surjective. We wish to prove in this case that the double category $C^\Phi$ is GG.

We wish to prove that every square in $C^\Phi$ is GG. To do this it is enough to prove that every morphism of $\int_{\Omega M}\Phi$ is GG. Let $(f,\varphi)$ be a morphism in $\int_{\Omega M}\Phi$. We represent $(f,\varphi)$ pictorially as

\begin{center}
\begin{tikzpicture}
  \matrix (m) [matrix of math nodes,row sep=4em,column sep=4em,minimum width=2em]
  {
     \ast&\ast \\
     \ast&\ast \\};
  \path[-stealth]
    (m-1-1) edge [red] node {}(m-1-2)
            edge [white] node [black, fill=white]{$(f,\varphi)$}(m-2-2)
    (m-2-1) edge [red] node {} (m-2-2)
    (m-1-1) edge node [left]{$f$} (m-2-1)
    (m-1-2) edge node [right]{$f$} (m-2-2);
\end{tikzpicture}

\end{center}

\noindent  By lemma \ref{lemmaimportant} and proposition \ref{vertlengthprop} in order to prove that $(f,\varphi)$ is GG it is enough to prove that $(f,\varphi)$ admits a pictorial representaiton as:

\begin{center}
\begin{tikzpicture}
  \matrix (m) [matrix of math nodes,row sep=4em,column sep=4em,minimum width=2em]
  {
     \ast&\ast \\
     \ast&\ast \\
     \ast&\ast \\
     \ast&\ast \\};
  \path[-stealth]
    (m-1-1) edge [red] node {} (m-1-2)
            edge [blue] node {}(m-2-1)
    (m-2-1) edge [red] node {} (m-2-2)
    (m-1-2) edge [blue] node {}(m-2-2)
    (m-1-1) edge[white] node[black][fill=white]{$\psi$}(m-2-2)
    (m-2-1) edge node [left]{$f$}(m-3-1)
    (m-2-2) edge node [right]{$f$}(m-3-2)
    (m-2-1) edge [white] node [black][fill=white]{$(f,\varphi)$}(m-3-2)
    (m-3-1) edge [red] node {} (m-3-2)
    (m-3-1) edge [blue] node {}(m-4-1)
    (m-3-2) edge [blue] node {}(m-4-2)
    (m-4-1) edge [red] node {} (m-4-2)
    (m-3-1) edge[white] node [black][fill=white]{$\eta$}(m-4-2);
\end{tikzpicture}

\end{center}

\noindent for elements $\psi,\eta\in A$. We actually prove that $(f,\varphi)$ admits a pictorial representation as:

\begin{center}
\begin{tikzpicture}
  \matrix (m) [matrix of math nodes,row sep=4em,column sep=4em,minimum width=2em]
  {
     \ast&\ast \\
     \ast&\ast \\
     \ast&\ast \\};
  \path[-stealth]
    (m-1-1) edge [red] node {} (m-1-2)
            edge [blue] node {}(m-2-1)
    (m-2-1) edge [red] node {} (m-2-2)
    (m-1-2) edge [blue] node {}(m-2-2)
    (m-1-1) edge [white] node [black][fill=white]{$\psi$}(m-2-2)
    (m-2-1) edge node [left]{$f$}(m-3-1)
    (m-2-2) edge node [right]{$f$}(m-3-2)
    (m-2-1) edge [white] node [black][fill=white]{$(f,\varphi)$}(m-3-2)
    (m-3-1) edge [red] node {} (m-3-2);
\end{tikzpicture}

\end{center}

\noindent for some element $\psi\in M$. The above pictorial equation is equivalent to the equation $(f,\varphi)=(f,1_A)(1_M,\psi)$ which in turn is equivalent to the equation $(f,\varphi)=(f,\Phi_f(\psi))$. A solution to this equation is thus found by solving the equation $\varphi=\Phi_f(\psi)$ in $A$, but the condition that $\Phi_f$ is surective guarantees the existence of such a solution. By lemma \ref{lemmaimportant} we thus have that every morphism in $\int_{\Omega M}\Phi$ is a morphism in $V^1_{\gamma C^\Phi}$. We conclude that $C^\Phi$ is GG.
\end{proof}

\begin{remark}
The arguments employed in the proof of proposition \ref{ggmonoids} work in the more general setting in which instead of considering bicategories of the form $2\Omega A$ we consider bicategories of the form $2C$ for a monoidal category $C$ as long as the category $C$ satisfies the condition that every morphism in $C$ factors through the tensor unit 1$_C$ of $C$. We avoid proving proposition \ref{ggmonoids} in this generality for simplity.
\end{remark}

\begin{cor}\label{lemma1rigid}

Let $G$ be a group. Let $A$ be a commutative monoid. Let $\Phi$ be a functor in $Fun(\Omega G,\mbox{\textbf{Cat}}^\otimes)_{\Omega A}$. In that case the double category $C^\Phi$ is GG.

\end{cor}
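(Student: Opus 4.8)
The plan is to deduce this directly from Proposition \ref{ggmonoids}, applied with $M=G$. A functor $\Phi$ as in the statement assigns to every $g\in G$ a strict monoidal endofunctor $\Phi_g$ of $End_{2\Omega A}(\ast)=\Omega A$, and since $\Omega A$ has a single object such an endofunctor is precisely a monoid endomorphism of $A$, which we again denote $\Phi_g$. In order to invoke Proposition \ref{ggmonoids} the only thing left to check is its hypothesis, namely that $\Phi_g\colon A\to A$ is surjective for every $g\in G$; the point of the corollary is that this is automatic once the indexing monoid is a group.

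To verify surjectivity, I would use functoriality of $\Phi$ together with the existence of inverses in $G$. Writing $e$ for the identity element of $G$, functoriality gives $\Phi_e=\mathrm{id}_A$ and $\Phi_{gh}=\Phi_g\circ\Phi_h$ for all $g,h\in G$. Specializing to $h=g^{-1}$ yields $\Phi_g\circ\Phi_{g^{-1}}=\Phi_{gg^{-1}}=\Phi_e=\mathrm{id}_A$ and likewise $\Phi_{g^{-1}}\circ\Phi_g=\mathrm{id}_A$, so $\Phi_g$ is an automorphism of $A$; in particular it is surjective. Hence every $\Phi_g$, $g\in G$, is a surjective endomorphism of $A$.

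With the hypothesis of Proposition \ref{ggmonoids} now established for $M=G$, that proposition gives that $C^\Phi$ is GG, completing the argument. I do not expect a genuine obstacle here: the substantive content lives entirely in Proposition \ref{ggmonoids}, and the corollary merely records that invertibility of the morphisms of $\Omega G$ forces the surjectivity condition on the transition endomorphisms $\Phi_g$. The only mild point requiring care is the identification of monoidal endofunctors of $\Omega A$ with monoid endomorphisms of $A$ and the observation that this identification is compatible with composition, so that inverses in $G$ really do produce inverse endomorphisms of $A$.
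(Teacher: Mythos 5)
Your argument is correct and is exactly the intended one: the paper states the corollary immediately after Proposition \ref{ggmonoids} with no separate proof, the point being precisely that invertibility in $G$ forces each $\Phi_g$ to be an automorphism of $A$, hence surjective, so the hypothesis of the proposition is automatic. Your verification that $\Phi_g\circ\Phi_{g^{-1}}=\mathrm{id}_A$ fills in the one detail the paper leaves implicit.
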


\section{Decorations by groups II}

\noindent In this final section we investigate a construction inverse to that presented in theorem \ref{thmmain} in the case of bicategories decorated by groups. The results of this section should be regarded as an extension of corollary \ref{1stcorollary}.

\

\noindent We consider the following problem: Let $G$ be a group. Let $A$ be a commutative monoid. Let $C$ be a double category satisfying the conditions of proposition \ref{ggmonoids}. We wish to construct a functor $\Phi$ in $Fun(\Omega G,\mbox{\textbf{Cat}}^\otimes)_{2\Omega A}$ such that $C$ and $C^\Phi$ are somehow related. We begin ou investigation of this problem by proving the following proposition.

\begin{prop}\label{groups1}
Let $G$ be a group. Let $A$ be a commutative monoid. Let $C$ be an object in \textbf{dCat}$_{(\Omega G,2\Omega A)}$ such that $C$ is GG and such that $\ell C=1$. In that case there exists a functor $\Phi$ in $Fun(\Omega G,\mbox{\textbf{Cat}}^\otimes)_{2\Omega A}$ and a double functor $\pi^C:C^\Phi\to C$ such that $H^*\pi^C=id_{(\Omega G,2\Omega A)}$ and $\pi^C_1$ is full. 
\end{prop}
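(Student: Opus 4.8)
The plan is to recover a Grothendieck datum $\Phi$ directly from the double category $C$, using decisively that $\Omega G$ is the delooping of a \emph{group}. First I would read off the shape of $C$: since $HC=2\Omega A$, it has a single object $\ast$ and a single horizontal morphism $i_\ast$; its globular squares are canonically the elements of the commutative monoid $A$ (the $2$-cells of $2\Omega A$), and on globular squares both $\boxvert$ and $\boxminus$ restrict to the multiplication of $A$ (the latter by Eckmann--Hilton, since $A$ is commutative). As $C_0=\Omega G$, the vertical morphisms of $C$ are the elements of $G$ and the horizontal identity squares are the $i_g$, $g\in G$. Because $i\colon C_0\to C_1$ is a functor and $G$ is a group, $i_g\boxvert i_{g^{-1}}=i_{g^{-1}g}=i_e=\mathrm{id}_{i_\ast}=i_{g^{-1}}\boxvert i_g$, so each $i_g$ is invertible in $C_1$ with inverse $i_{g^{-1}}$; this invertibility is exactly where the group hypothesis enters.

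Next I would build $\Phi$. Set $\Phi(\ast)=\Omega A=End_{2\Omega A}(\ast)$, and for $g\in G$ let $\Phi_g\colon\Omega A\to\Omega A$ be the identity on the unique object and $\Phi_g(a)=i_g\boxvert a\boxvert i_g^{-1}$ on $a\in A$; the left and right edges of this composite are $g\circ e\circ g^{-1}=e$, so $\Phi_g(a)$ is again a globular square, i.e.\ an element of $A$. Using $i_g^{-1}\boxvert i_g=\mathrm{id}_{i_\ast}$, associativity of $\boxvert$, and that $\boxvert$ is the multiplication of $A$ on globular squares, one checks $\Phi_g(ab)=\Phi_g(a)\Phi_g(b)$ and $\Phi_g(1_A)=i_g\boxvert i_g^{-1}=1_A$, so $\Phi_g$ is a strict monoidal endofunctor of $\Omega A$; functoriality of $i$ gives $\Phi_e=\mathrm{id}$ and $\Phi_{gh}=\Phi_g\circ\Phi_h$, so $\Phi\in Fun(\Omega G,\mbox{\textbf{Cat}}^\otimes)_{2\Omega A}$. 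Since $\widetilde{(2\Omega A)}_2$ is empty, Theorem~\ref{thmmain} gives $C^\Phi_0=\Omega G$ and $C^\Phi_1=\int_{\Omega G}\Phi$, whose squares are the pairs $(g,a)$ with $g\in G$, $a\in A$, satisfying $(g',a')\boxvert(g,a)=(g'g,a'\Phi_{g'}(a))$, $(g,a)\boxminus(g,a')=(g,aa')$, $i_g^{C^\Phi}=(g,1_A)$ and $s=t\colon(g,a)\mapsto g$.

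Then I would define $\pi^C$ to be the identity on $C_0$ and, on $C_1$, to send the unique object to $i_\ast$ and a square $(g,a)$ to $\pi^C_1(g,a):=a\boxvert i_g$, the composite in $C_1$ of the globular square $a$ with $i_g$, taken in the order dictated by the composition rule of $\int_{\Omega G}\Phi$. Compatibility with $s,t$ and with $i$ is immediate ($\pi^C_1(g,1_A)=1_A\boxvert i_g=i_g$). For functoriality of $\pi^C_1$ and compatibility with $\boxvert$ one uses the identity $\Phi_{g'}(a)\boxvert i_{g'}=i_{g'}\boxvert a$ (cancelling $i_{g'}^{-1}$) together with $i_{g'g}=i_{g'}\boxvert i_g$:
\[
\pi^C_1\big((g',a')\boxvert(g,a)\big)=(a'\Phi_{g'}(a))\boxvert i_{g'g}=a'\boxvert i_{g'}\boxvert a\boxvert i_g=\pi^C_1(g',a')\boxvert\pi^C_1(g,a),
\]
and for compatibility with $\boxminus$ one invokes the interchange law of $C$, the horizontal unit law $i_g\boxminus i_g=i_g$ (valid in any double category), and $\boxminus=$ multiplication on globular squares:
\[
\pi^C_1(g,a)\boxminus\pi^C_1(g,a')=(a\boxminus a')\boxvert(i_g\boxminus i_g)=(aa')\boxvert i_g=\pi^C_1(g,aa').
\]
As $2\Omega A$ is strict and $\pi^C$ is the identity on $C_0$ and on $HC$, the coherence cells are preserved, so $\pi^C$ is a double functor, and since it fixes $\Omega G$ and sends the globular square $(e,a)$ to $a$ we obtain $H^*\pi^C=\mathrm{id}_{(\Omega G,2\Omega A)}$. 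Finally, fullness of $\pi^C_1$: since $C$ is GG with $\ell C=1$ we have $C_1=V^1_{\gamma C}$, the subcategory of $C_1$ generated by the globular and horizontal identity squares; by invertibility of the $i_g$ and the computation above, the set $\{a\boxvert i_g:a\in A,\ g\in G\}$ contains every generator and is closed under $\boxvert$, hence equals $V^1_{\gamma C}=C_1$, so every square of $C$ is of the form $a\boxvert i_g=\pi^C_1(g,a)$ and $\pi^C_1$ is surjective on morphisms, in particular full.

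I expect the main obstacle to be the last step, i.e.\ pinning down the normal form $a\boxvert i_g$ for an arbitrary square of $C$: this rests on the explicit description of first vertical categories from \cite{yo1} (as already used in Lemma~\ref{lemmaimportant}) and, once more, on the invertibility of the horizontal identity squares $i_g$. A secondary technical point requiring care is that the relations $\Phi_{g'}(a)\boxvert i_{g'}=i_{g'}\boxvert a$, $i_g\boxminus i_g=i_g$ and the interchange identity are verified inside $C$ itself, and that $\Phi$ genuinely satisfies all the conditions defining $Fun(\Omega G,\mbox{\textbf{Cat}}^\otimes)_{2\Omega A}$.
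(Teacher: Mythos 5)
Your proof is correct and follows essentially the same route as the paper's: both define $\Phi_g$ by conjugating globular squares by the horizontal identity $i_g$ (the group hypothesis is used exactly as you say, to make each $i_g$ invertible in $C_1$ via functoriality of $i$), and both take $\pi^C_1$ to be, in effect, the evaluation $(g,a)\mapsto a\boxvert i_g$ — up to the harmless left/right conjugation convention, since the paper's relation $i_g^{-1}\boxvert a\boxvert i_g=\Phi_g(a)$ and your $\Phi_g(a)=i_g\boxvert a\boxvert i_g^{-1}$ differ only by precomposing $\Phi$ with inversion on $G$.

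The one genuine difference is structural rather than conceptual. The paper outsources both the existence and the fullness of $\pi^C_1$ to Lemma~4.11 of \cite{yo2}: it first produces a full functor $\pi^{C,1}$ from the free length-one vertical category $V_1^{(\Omega G,2\Omega A)}\cong\Omega(G\ast A)$ to $C_1$, then observes that the conjugation relation forces this to factor through $\Omega(G\rtimes_\Phi A)=C^\Phi_1$, and reads fullness off from fullness of $\pi^{C,1}$. You instead bypass the free product entirely: you define $\pi^C_1$ directly on the semidirect product, verify functoriality, compatibility with $\boxminus$ and with $i$ by hand, and reprove fullness by showing the image $\{a\boxvert i_g\}$ is a subcategory of $C_1$ containing the globular and horizontal-identity generators of $V^1_{\gamma C}$ and hence equals $C_1$ since $\ell C=1$. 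This is the same computation unrolled, so your version is more self-contained but a bit longer; the paper's is terser at the cost of leaning on the cited lemma. Both are valid. The only place you should be a touch more careful is the unlabeled appeal to $i_g\boxminus i_g=i_g$ and to the interchange law: in the paper's setting these are fine (the relevant composites are strict), but they deserve a line acknowledging that the horizontal unit and interchange coherences are being used strictly, which is implicit in the way $C^\Phi$ is built in Theorem~\ref{thmmain}.
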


\begin{proof}
Let $G$ be a group. Let $A$ be a commutative monoid. Let $C$ be a GG double internalization of $(\Omega G,2\Omega A)$ such that $\ell C=1$. We wish to prove that there exist a functor $\Phi$ in $Fun(\Omega G,\mbox{\textbf{Cat}}^\otimes)_{2\Omega A}$ and a full double functor $\pi^C:C^\Phi\to C$ such that $H^*\pi^C=id_{(\Omega G,2\Omega A)}$.

In order to obtain a functor $\Phi$ in $Fun(\Omega G,\mbox{\textbf{Cat}}^\otimes)_{\Omega A}$ it is enough to define a representation, which we will keep denoting by $\Phi$, of $G$ in $End(A)$. Given $g\in G$ and $a\in A$ we make $\Phi_g(a)$ to be defined through the following diagram in $C^\Phi$:

\begin{center}
\begin{tikzpicture}
  \matrix (m) [matrix of math nodes,row sep=4em,column sep=4em,minimum width=2em]
  {
     \ast&\ast \\
     \ast&\ast \\
     \ast&\ast \\
     \ast&\ast \\};
  \path[-stealth]
    (m-1-1) edge [red] node {} (m-1-2)
            edge node [left]{$f$}(m-2-1)
    (m-2-1) edge [red] node {} (m-2-2)
    (m-1-2) edge node [right]{$f$}(m-2-2)
    (m-1-1) edge [white] node [black][fill=white]{$i_f$}(m-2-2)
    (m-2-1) edge [blue]node {}(m-3-1)
    (m-2-2) edge [blue]node {}(m-3-2)
    (m-2-1) edge [white] node [black][fill=white]{$a$}(m-3-2)
    (m-3-1) edge [red] node {} (m-3-2)
    (m-3-1) edge node [left]{$f^{-1}$}(m-4-1)
    (m-3-2) edge node [right]{$f^{-1}$}(m-4-2)
    (m-4-1) edge [red] node {} (m-4-2)
    (m-3-1) edge [white] node [black][fill=white]{$i_{f^{-1}}$}(m-4-2);
\end{tikzpicture}

\end{center}

\noindent To see that this defines a representation of $G$ in $End(A)$ we first observe that by the equation $HC=\Omega A$ it follows that the above 2-morphism of $C$ is globular and thus is an element of $A$. The fact that as defined above the function $\Phi$ defines a morphism from $G$ to $End(A)$ follows immediatly from the structure equations defining $C$. We thus obtain a representation $\Phi:G\to End(A)$ which forms the morphism function of a functor $\Phi$ in $Fun(\Omega G,\mbox{\textbf{Cat}}^\otimes)_{2\Omega A}$. 

We now construct a full double functor $\pi^C:C^\Phi\to C$ satisfying the equation $H^*\pi^C=id_{(\Omega G,2\Omega A)}$. By the fact that $C$ is a length 1 GG internalization of $(\Omega G,2\Omega A)$ and by Lemma 4.11 of \cite{yo2} we obtain a full double functor 

\[\pi^{C,1}:V_1^{(\Omega G,2\Omega A)}\to C_1\]

\noindent such that the restriction of $\pi^{C,1}$ to the collections of 1- and 2-cells $2\Omega A_1$ and $2\Omega A_2$ of $2\Omega A$ is equal to $id_{2\Omega A_1}$ and $id_{2\Omega A_2}$ respectively. An easy computation proves that $V_1^{(\Omega G,2\Omega A)}$ is equal to the delooping category $\Omega(G\ast A)$ of the free product $G\ast A$ of $G$ and $A$. From this and from the fact that for every $g\in G$ and $a\in A$ the 2-morphisms $i_a$ and $b$ satisfy the relation 

\[i_{g}^{-1}ai_g=\Phi_g(a)\]

\noindent it follows that there exists an epic morphism $\pi^C_1:G\rtimes_\Phi A\to End_{C_1}(id_\ast)$ making the following triangle commute:

 \begin{center}

\begin{tikzpicture}
  \matrix (m) [matrix of math nodes,row sep=2em,column sep=2em,minimum width=2em]
  {
     G\ast A&&End_{C_1}(id_\ast) \\
     &G\rtimes_\Phi A& \\};
  \path[-stealth]
    (m-1-1) edge node {}(m-2-2)
            edge node [above] {$\pi^{C}$}(m-1-3)
    (m-2-2) edge node [below] {$\pi^{1,C}$} (m-1-3);
\end{tikzpicture}
\end{center}

\noindent By the way $\pi^{1,C}$ is defined it follows that the pair $\pi^C=(id_G,\pi^1)$ is a full double functor from $C^\Phi$ to $C$ satisfyng the equation $H^*\pi^C=id_{(\Omega G,2\Omega A)}$. This concludes the proof. 

\end{proof}

\noindent Let $(\mathcal{B}^*,\mathcal{B})$ be a decorated bicategory. We will write $\mbox{\textbf{gCat}}^{\ell=1}_{(\mathcal{B}^*,\mathcal{B})}$ for the category whose objects are GG double categories $C$ satisfying the equations $H^*C=(\mathcal{B}^*,\mathcal{B})$ and $\ell C=1$. Given objects $C,C'$ in $\mbox{\textbf{gCat}}^{\ell=1}_{(\mathcal{B}^*,\mathcal{B})}$ we make the collection of morphisms from $C$ to $C'$ to be the collection of double functors $F$ from $C$ to $C'$ such that the object functor $F_0$ of $F$ is equal to the identity endofunctor of $\mathcal{B}^*$. Observe that as defined $\mbox{\textbf{gCat}}^{\ell=1}_{(\mathcal{B}^*,\mathcal{B})}$ is not a subcategory of \textbf{dCat}$_{(\mathcal{B^*,\mathcal{B}})}$. We extend proposition \ref{groups1} through the following lemma.

\begin{lem}\label{groups2}
Let $G$ be a group. Let $A$ be a commuatitve monoid. The function associating to every $C$ in \textbf{gCat}$_{(\Omega G,2\Omega A)}^{\ell=1}$ the functor $\Phi^C$ extends to a functor

\[\Phi^\bullet:\mbox{\textbf{gCat}}_{(\Omega G,2\Omega A)}^{\ell=1}\to Fun(\Omega G,\mbox{\textbf{Cat}}^\otimes)_{2\Omega A}\]

\end{lem}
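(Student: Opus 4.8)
The plan is to upgrade the object-level assignment $C\mapsto\Phi^C$ of Proposition \ref{groups1} to a functor by specifying its action on morphisms and checking functoriality. Recall that for $C$ in $\mbox{\textbf{gCat}}^{\ell=1}_{(\Omega G,2\Omega A)}$ the functor $\Phi^C$ is determined by the representation $\Phi^C:G\to End(A)$ whose value $\Phi^C_g(a)$ is read off from the globular square $i_{g^{-1}}\boxvert a\boxvert i_g$ in $C$. A morphism in $\mbox{\textbf{gCat}}^{\ell=1}_{(\Omega G,2\Omega A)}$ is a double functor $F:C\to C'$ with $F_0=id_{\Omega G}$; since $H^*F$ need not be the identity a priori, I first observe that $F$ restricted to globular squares is an endomorphism of the monoid $A=End_{HC}(id_\ast)$, i.e. $F$ determines a monoid endomorphism $\theta_F\colon A\to A$, and $F$ also fixes the vertical morphisms (the elements of $G$) since $F_0=id$. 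The natural candidate for $\Phi^\bullet(F)$ is the natural transformation $\Phi^C\Rightarrow\Phi^{C'}$ whose single component (both categories have one object) is $\theta_F\colon A\to A$.

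First I would make precise what a morphism in $Fun(\Omega G,\mbox{\textbf{Cat}}^\otimes)_{2\Omega A}$ is in this single-object setting: such a functor is a monoid homomorphism $G\to End(A)$, and a natural transformation between two of them, say $\Phi,\Psi\colon G\to End(A)$, is a monoidal endofunctor $\theta$ of $\Omega A$ — equivalently a monoid endomorphism $\theta\colon A\to A$ — such that $\theta\circ\Phi_g=\Psi_g\circ\theta$ for every $g\in G$. So the content of the functoriality claim reduces to: (i) for every $F$, the induced $\theta_F$ intertwines $\Phi^C$ and $\Phi^{C'}$; (ii) $\theta_{id_C}=id_A$; (iii) $\theta_{F'\circ F}=\theta_{F'}\circ\theta_F$.

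The key step is (i). Apply the double functor $F$ to the defining diagram of $\Phi^C_g(a)$, namely the vertical composite of $i_{g^{-1}}$, the globular square $a$, and $i_g$. Because $F$ preserves vertical composition, horizontal identities, and sends the vertical morphism $g$ to $g$ (as $F_0=id$), we get
\[
F\bigl(i_{g^{-1}}\boxvert a\boxvert i_g\bigr)=i_{g^{-1}}\boxvert F(a)\boxvert i_g=i_{g^{-1}}\boxvert\theta_F(a)\boxvert i_g,
\]
and the left-hand side equals $F(\Phi^C_g(a))=\theta_F(\Phi^C_g(a))$, while the right-hand side is by definition $\Phi^{C'}_g(\theta_F(a))$. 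This is exactly the intertwining equation $\theta_F\circ\Phi^C_g=\Phi^{C'}_g\circ\theta_F$. Items (ii) and (iii) are immediate from the fact that $\theta$ is just the restriction of the morphism functor $F_1$ to the endomorphism monoid of $id_\ast$, and restriction of functors is strictly functorial; one should also note $\theta_F$ is a monoid (indeed monoidal, since horizontal composition of globular squares is the monoid operation of $A$) homomorphism because $F$ preserves horizontal composition and horizontal identities.

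The main obstacle I anticipate is a bookkeeping one rather than a conceptual one: being careful that a morphism $F$ in $\mbox{\textbf{gCat}}^{\ell=1}_{(\Omega G,2\Omega A)}$ really does fix $G$ on the nose and restricts to a well-defined monoid endomorphism of $A$ — this uses $F_0=id_{\Omega G}$ for the former and the definition of $HF$ together with $HC=HC'=2\Omega A$ for the latter — and that the resulting $\theta_F$ lands among the morphisms of $Fun(\Omega G,\mbox{\textbf{Cat}}^\otimes)_{2\Omega A}$, i.e. is genuinely monoidal and genuinely intertwines. Once the displayed computation above is in place, closing the proof is routine, and I would end by remarking that $\Phi^\bullet$ so defined is compatible with the full double functors $\pi^C$ of Proposition \ref{groups1}, though that compatibility is not needed for the statement of the lemma itself.
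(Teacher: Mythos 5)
Your proof matches the paper's essentially step for step. Both take a morphism $F\colon C\to C'$ in $\textbf{gCat}^{\ell=1}_{(\Omega G,2\Omega A)}$, restrict $F_1$ to the endomorphism monoid $A$ of $id_\ast$ to get the sole component of the candidate natural transformation, and obtain the intertwining relation $F_1\Phi^C_g(a)=\Phi^{C'}_g(F_1(a))$ by applying $F$ to the defining conjugation diagram $i_{g^{-1}}\boxvert a\boxvert i_g$ and using $F_1(i_g)=i'_g$. The paper calls your $\theta_F$ simply $F_1\restriction_A$, records the naturality square, and, like you, dismisses strict functoriality of $C\mapsto\Phi^C$, $F\mapsto F_1\restriction_A$ as immediate; your write-up is a touch more explicit about what a morphism of $Fun(\Omega G,\textbf{Cat}^\otimes)_{2\Omega A}$ is in this one-object setting and about $\theta_F$ being a monoidal (not merely monoid) endomorphism, but there is no difference in route or in the key lemma used.
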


\begin{proof}
Let $G$ be a group. Let $A$ be a commutative monoid. We wish to prove that the operation of associating the functor $\Phi^C$ to an object $C$ in \textbf{gCat}$_{(\Omega G,2\Omega A)}^{\ell=1}$ extends to a functor $\Phi^\bullet:\mbox{\textbf{gCat}}_{(\Omega G,2\Omega A)}^{\ell=1}\to Fun(\Omega G,\mbox{\textbf{Cat}}^\otimes)_{2\Omega A}$. 

Let $C,C'$ be objects in \textbf{gCat}$_{(\Omega G,2\Omega A)}^{\ell=1}$. Let $g\in G$. We will write $i_g$ and $i'_g$ for the horizontal identities of $g$ in $C$ and in $C'$ respectively. The horizontal identities $i_g,i'_g$ are related by the equation $F_1(i_g)=i'_g$. From this equation we obtain:

\[F_1(i_{g^{-1}}ai_g)=i'^{-1}_gF_1(a) i'_g\]

\noindent for every $a\in A$. The equation:

\[F_1\Phi^C_g(a)=\Phi^{C'}_g(F_1(a))\]

\noindent follows for every $g\in G$ and for every $a\in A$. Observe that the fact that $F$ is a double functor implies that $F_1\restriction_A$ is an endomorphism of $A$. From this and from the equations above it is immediate that the following square commutes for every $g\in G$:

 \begin{center}

\begin{tikzpicture}
  \matrix (m) [matrix of math nodes,row sep=3em,column sep=7em,minimum width=2em]
  {
     \Omega A&\Omega A \\
     \Omega  A&\Omega A \\};
  \path[-stealth]
    (m-1-1) edge node [left] {$\Phi_g^C$}(m-2-1)
            edge node [above] {$F_1\restriction_A$}(m-1-2)
    (m-2-1) edge node [below] {$F_1\restriction_A$} (m-2-2)
    (m-2-2) edge node [right] {$\Phi_g^{C'}$} (m-1-2);
\end{tikzpicture}
\end{center}

\noindent This proves that $F_1\restriction_A$ defines a natural transformation from $\Phi^C$ to $\Phi^{C'}$. It is immediate that the function associating to every morphism $F$ in \textbf{gCat}$_{(\Omega G,2\Omega A)}^{\ell=1}$ the natural transformation $F_1\restriction_A$ extends the function associating to every double category $C$ in \textbf{gCat}$_{(\Omega G,2\Omega A)}^{\ell=1}$ to a functor from \textbf{gCat}$_{(\Omega G,2\Omega A)}^{\ell=1}$ to $ Fun(\Omega G,\mbox{\textbf{Cat}}^\otimes)_{2\Omega A}$. This concludes the proof of the lemma.

\end{proof}

\begin{prop}\label{groups3}
Let $G$ be a group. Let $A$ be a commutative monoid. In that case the following relation holds:

\[\Phi^\bullet\vdash C^\bullet\]
\end{prop}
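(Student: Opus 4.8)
The plan is to establish the adjunction $\Phi^\bullet\vdash C^\bullet$ by producing the unit and counit and verifying the triangle identities, while taking care of the subtlety that $\mbox{\textbf{gCat}}^{\ell=1}_{(\Omega G,2\Omega A)}$ is \emph{not} a subcategory of $\mbox{\textbf{dCat}}_{(\Omega G,2\Omega A)}$, so the two functors $C^\bullet:Fun(\Omega G,\mbox{\textbf{Cat}}^\otimes)_{2\Omega A}\to\mbox{\textbf{gCat}}^{\ell=1}_{(\Omega G,2\Omega A)}$ and $\Phi^\bullet$ in the opposite direction really do compose both ways. First I would check that $C^\bullet$ does land in $\mbox{\textbf{gCat}}^{\ell=1}_{(\Omega G,2\Omega A)}$: by Corollary \ref{lemma1rigid} the double category $C^\Phi$ is GG whenever $G$ is a group, and by Proposition \ref{vertlengthprop} it has vertical length $1$, so $C^\Phi$ is an object of $\mbox{\textbf{gCat}}^{\ell=1}_{(\Omega G,2\Omega A)}$; functoriality of $C^\bullet$ was already noted after Theorem \ref{thmmain}.

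Next I would construct the counit $\varepsilon:C^{\Phi^\bullet(-)}\Rightarrow \mathrm{id}$. For each $C$ in $\mbox{\textbf{gCat}}^{\ell=1}_{(\Omega G,2\Omega A)}$ Proposition \ref{groups1} already hands us a full double functor $\pi^C:C^{\Phi^C}\to C$ with $H^*\pi^C=\mathrm{id}_{(\Omega G,2\Omega A)}$; I would take $\varepsilon_C:=\pi^C$. Naturality in $C$ amounts to checking that for a morphism $F:C\to C'$ in $\mbox{\textbf{gCat}}^{\ell=1}_{(\Omega G,2\Omega A)}$ the square relating $\pi^C$, $\pi^{C'}$ and $C^{\Phi^\bullet(F)}=C^{F_1\restriction_A}$ commutes; this follows from the explicit description of $\pi^{C,1}$ as the map factoring the canonical $G\ast A\to End_{C_1}(id_\ast)$ through $G\rtimes_{\Phi^C}A$, together with the compatibility $F_1\Phi^C_g(a)=\Phi^{C'}_g(F_1(a))$ established in the proof of Lemma \ref{groups2}. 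For the unit $\eta:\mathrm{id}\Rightarrow \Phi^\bullet(C^{(-)})$ I would compute $\Phi^{C^\Phi}$ directly: its value at $g\in G$ is, by the defining diagram in the proof of Proposition \ref{groups1}, the map $a\mapsto i_{g}^{-1}\,a\,i_g$ computed inside $C^\Phi$, and since in $C^\Phi$ one has $i_g=(g,1_A)$ and the vertical composite $(g,1_A)^{-1}(1_M,a)(g,1_A)$ equals $(1_M,\Phi_g(a))$ by the composition law of $\int_{\Omega G}\Phi$, we get $\Phi^{C^\Phi}_g=\Phi_g$ on the nose; hence $\eta$ can be taken to be the identity natural transformation, i.e. $\Phi^\bullet\circ C^\bullet=\mathrm{id}$.

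With $\eta=\mathrm{id}$ the triangle identities reduce to two checks. The identity $\varepsilon_{C^\Phi}\circ C^{\eta_\Phi}=\mathrm{id}_{C^\Phi}$ becomes simply $\pi^{C^\Phi}=\mathrm{id}_{C^\Phi}$, which I would verify from the explicit formula for $\pi^{C^\Phi,1}$: since $End_{(C^\Phi)_1}(id_\ast)$ is already the image of $G\rtimes_{\Phi}A$ (because $C^\Phi_1=\int_{\Omega G}\Phi=\Omega(A\rtimes_\Phi G)$ by the computation in Section 4), the factoring map $\pi^{C^\Phi,1}$ is the identity. The other triangle identity $\Phi^{\varepsilon_C}\circ \eta_{\Phi^C}=\mathrm{id}_{\Phi^C}$ becomes $\Phi^\bullet(\pi^C)=\mathrm{id}_{\Phi^C}$, which follows from Lemma \ref{groups2}'s formula $\Phi^\bullet(F)=F_1\restriction_A$ applied to $F=\pi^C$: by construction $\pi^C$ restricted to the globular squares $A$ is the identity, so $\Phi^\bullet(\pi^C)=\mathrm{id}$. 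The main obstacle I expect is purely bookkeeping: making sure all the identifications ($i_g=(g,1_A)$, $C^\Phi_1\cong\Omega(A\rtimes_\Phi G)$, $\Phi^{C^\Phi}=\Phi$) are stated with enough care that $\eta$ really is the identity rather than merely a natural isomorphism, and checking naturality of $\varepsilon$ against the slightly unusual morphisms of $\mbox{\textbf{gCat}}^{\ell=1}_{(\Omega G,2\Omega A)}$ (double functors that are the identity on objects but need not preserve horizontal identities strictly in the way a morphism of $\mbox{\textbf{dCat}}_{(\Omega G,2\Omega A)}$ would); the relation $F_1(i_g)=i'_g$ used in Lemma \ref{groups2} is exactly what makes this go through.
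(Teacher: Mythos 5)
Your proposal is correct and follows essentially the same route as the paper: both use the counit--unit presentation of the adjunction, with $\varepsilon_C = \pi^C$ from Proposition~\ref{groups1}, the identity for $\eta$ after observing $\Phi^{C^\Phi}=\Phi$ on the nose, naturality of $\varepsilon$ via the compatibility $F_1\Phi^C_g(a)=\Phi^{C'}_g(F_1(a))$ from Lemma~\ref{groups2}, and the triangle identities reducing to $\pi^{C^\Phi}=\mathrm{id}_{C^\Phi}$ and $\Phi^{\pi^C}=\mathrm{id}_{\Phi^C}$. Your added bookkeeping (confirming $C^\bullet$ lands in $\mbox{\textbf{gCat}}^{\ell=1}_{(\Omega G,2\Omega A)}$ via Corollary~\ref{lemma1rigid} and Proposition~\ref{vertlengthprop}, and computing $\Phi^{C^\Phi}_g$ explicitly from the composition law in $\int_{\Omega G}\Phi$) is sound and makes the argument somewhat more self-contained, but it does not change the underlying proof.
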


\begin{proof}
Let $G$ be a group. Let $A$ be a commutative monoid. We wish to prove in this case that $\Phi^\bullet\vdash C^\bullet$.

We provide a counit-unit pair $(\epsilon,\eta)$ for the adjunction $\Phi^\bullet\vdash C^\bullet$. We begin by defining $\epsilon$. Let $C$ be an object in \textbf{gCat}$_{(\Omega G,2\Omega A)}^{\ell=1}$. We make $\epsilon_C$ to be the functor $\pi^C$ deifned in lemma \ref{groups2}. We prove that the collection $\epsilon$ of double functors $\pi^C$ with $C$ running through \textbf{gCat}$_{(\Omega G,2\Omega A)}^{\ell=1}$ is a natural transformation from $C^\bullet\Phi^\bullet$ to the identity endofunctor of \textbf{gCat}$_{(\Omega G,2\Omega A)}^{\ell=1}$. To see this let $C,C'$ be objects in \textbf{gCat}$_{(\Omega G,2\Omega A)}^{\ell=1}$ and let $F:C\to C'$ be a double functor such that $F_0=id_{\Omega G}$. In that case the restriction $F_1\restriction_A$ of the morphism functor $F_1$ of $F$ is a natural transformation from $\Phi^C$ to $\Phi^{C'}$ by lemma \ref{groups2}. From this and from the equation $H^*\pi^C=id_{(\Omega G,2\Omega A)}$ it follows that the following diagram is commutative:

\begin{center}

\begin{tikzpicture}
  \matrix (m) [matrix of math nodes,row sep=3em,column sep=7em,minimum width=2em]
  {
     C^{\Phi^C}&C'^{\Phi^{C'}} \\
     C &C' \\};
  \path[-stealth]
    (m-1-1) edge node [left] {$\pi^C$}(m-2-1)
            edge node [above] {$F$}(m-1-2)
    (m-2-1) edge node [below] {$F$} (m-2-2)
    (m-1-2) edge node [right] {$\pi^{C'}$} (m-2-2);
\end{tikzpicture}
\end{center}

\noindent from which we conclude that $\epsilon$ is indeed a natural transformation from $C^\bullet\Phi^\bullet$ to the identity endofunctor of \textbf{gCat}$_{(\Omega G,2\Omega A)}^{\ell=1}$.

We now define $\eta$. Observe that for every $\Phi$ in $Fun(\Omega G,\mbox{\textbf{Cat}}^\otimes)_{(\Omega G,2\Omega A)}$ the monoid of endomorphisms of the horizontal identity $i_\ast$ of the only object in $C^\Phi$ is equal to $G\rtimes_\Phi A$ and thus $\pi^{C^\Phi}$ is equal to $\Phi$. We make $\eta$ to be the collection of identity natural transformations $id_\Phi$ with $\Phi$ running through the collection of all functors in $Fun(\Omega G,\mbox{\textbf{Cat}}^\otimes)_{(\Omega G,2\Omega A)}$.

We prove that the pair $(\epsilon,\eta)$ satisfies the triangle identities. That is, we prove that the following triangles commute:

\begin{center}

\begin{tikzpicture}
  \matrix (m) [matrix of math nodes,row sep=3em,column sep=5em,minimum width=2em]
  {
     C^\bullet&C^\bullet &\Phi^\bullet&\Phi^\bullet\\
     \Phi^\bullet C^\bullet\Phi^\bullet&&C^\bullet\Phi^\bullet C^\bullet & \\};
  \path[-stealth]
    (m-1-3) edge node [left] {$\Phi^\bullet\eta$}(m-2-3)
            edge node [above]{$id_{\Phi^\bullet}$}(m-1-4)
    (m-2-3) edge node [right] {$\epsilon\Phi^\bullet$} (m-1-4)
   (m-1-1) edge node [left] {$\eta C^\bullet$}(m-2-1)
            edge node [above]{$id_{C^\bullet}$}(m-1-2)
    (m-2-1) edge node [right] {$C^\bullet\epsilon$} (m-1-2) ;
\end{tikzpicture}
\end{center}

\noindent The commutativity of the triangle on the left hand side reduces, in our setting, to proving that for every functor $\Phi$ in $Fun(\Omega G,\mbox{\textbf{Cat}}^\otimes)_{(\Omega G,2\Omega A)}$ the equation $\pi^{C^\Phi}=id_{C^\Phi}$ holds. To prove this observe that the monoid of endomorphisms $End_{C^\Phi}(i_\ast)$ of the horizontal identity $i_\ast$ of the only object $\ast$ in $C^\Phi$ is equal to $G\rtimes_\Phi A$. From this and from the way $\pi^C$ was defined we conclude that $\pi^\Phi=id_{C^\Phi}$. The commutativity of the second triangle above reduces to proving that the equation $id_{\Phi^C}=\Phi^{\pi^C}$ holds for every object $C$ in \textbf{gCat}$_{(\Omega G,2\Omega A)}^{\ell=1}$. This follows from arguments analogous as the ones described in proving the commutativity of the triangle on the left hand side above. This concludes the proof.

\end{proof}

\bibliographystyle{plain}
\bibliography{biblio}

\end{document}